\documentclass[10pt,reqno]{amsart}

\usepackage{enumerate}
\usepackage{amsmath, amssymb, amsthm,amsfonts}
\usepackage{mathrsfs}
\usepackage{esint}
\usepackage{xcolor}
\usepackage{mathtools}
\usepackage{hyperref}
\usepackage{bm}
\usepackage{accents}
\usepackage[foot]{amsaddr}
\usepackage{todonotes}

\makeatletter

\numberwithin{equation}{section}

\newtheorem{thm}{Theorem}[section]
\newtheorem{theorem}[thm]{Theorem}
\newtheorem{lemma}[thm]{Lemma}

\theoremstyle{definition}

\newtheorem{definition}[thm]{Definition}

\newtheorem{assumption}[thm]{Assumption}
\theoremstyle{remark}

\newtheorem{remark}[thm]{\bf{Remark}}

\newcommand\aint{-\hspace{-0.38cm}\int}


\newcommand\bH{\mathbb{H}}

\newcommand\bL{\mathbb{L}}

\newcommand\bN{\mathbb{N}}

\newcommand\bR{\mathbb{R}}

\newcommand\bZ{\mathbb{Z}}



\newcommand\cH{\mathcal{H}}

\newcommand\sL{\mathscr{L}}

\newcommand\frH{\mathfrak{H}}

\begin{document}

\title[Degenerate linear equations]{On nondivergence form linear parabolic and elliptic equations
with degenerate coefficients}

\author{Hongjie Dong$^{1}$}
\address{$^1$ Division of Applied Mathematics, Brown University, 182 George Street, Providence, RI 02912, USA}
\email{Hongjie\_Dong@brown.edu}
\thanks{H. Dong was partially supported by the NSF under agreement DMS2350129.}

\author{Junhee Ryu$^{2,*}$}
\address{$^2$ School of Mathematics, Korea Institute for Advanced Study, 85 Hoegi-ro, Dongdaemun-gu, Seoul, 02455, Republic of Korea}
\email{junhryu@kias.re.kr}
\thanks{$^*$Corresponding author. J. Ryu was supported by a KIAS Individual Grant (MG101501) at Korea Institute for Advanced Study and
by the National Research Foundation of Korea (NRF) grant funded by the Korea government (MSIT) (No. RS-2026-25477288).}

\subjclass[2020]{35J70, 35K65, 35D30, 35R05}

\keywords{Degenerate linear equations, nondivergence form, existence and uniqueness, weighted Sobolev spaces}

\begin{abstract}
We establish the unique solvability in weighted mixed-norm Sobolev spaces for a class of degenerate parabolic and elliptic equations in the upper half space. The operators are in nondivergence form, with the leading coefficients given by  $x_d^2a_{ij}$, where $a_{ij}$ is bounded, uniformly nondegenerate, and measurable in $(t,x_d)$ except $a_{dd}$, which is measurable in $t$ or $x_d$. In the remaining spatial variables, they have weighted small mean oscillations. In addition, we investigate the optimality of the function spaces associated with our results.
\end{abstract}

\maketitle

\section{Introduction}

In this paper, we study a class of parabolic and elliptic equations with degenerate coefficients in
the upper half space. More precisely, we investigate
\begin{equation} \label{mainpara}
  \sL_p u +\lambda c_0 u = f
\end{equation}
and
\begin{equation} \label{mainell}
  \sL_e u +\lambda c_0 u = f,
\end{equation}
where $\sL_p$ and $\sL_e$ are the second-order nondivergence form operators defined by
\begin{equation*}
  \sL_p u:= a_0 u_t - x_d^2 a_{ij}D_{ij}u + x_d b_iD_iu + cu
\end{equation*}
and
\begin{equation*}
  \sL_e u:= -x_d^2 a_{ij}D_{ij}u + x_d b_iD_iu +cu,
\end{equation*}
respectively.
Here, $\lambda\geq0$, $f$ is a given measurable forcing term, $(a_{ij})$ satisfies the ellipticity and boundedness conditions
\begin{equation} \label{ellip}
  \nu|\xi|^2\leq a_{ij}\xi_i\xi_j, \quad |a_{ij}|\leq \nu^{-1},
\end{equation}
and $a_0$, $(b_i)$, $c$, and $c_0$ satisfy
\begin{equation} \label{lowerbound}
  |b_i|,|c| \leq K
\end{equation}
and
\begin{equation} \label{rembound}
  K^{-1}\leq a_0,c_0 \leq K.
\end{equation}
We emphasize that the two zeroth-order terms, $cu$ and $\lambda c_0 u$, play distinct roles in the analysis.
The domain of the parabolic problem \eqref{mainpara} is $\Omega_T := (-\infty,T)\times\bR^d_+$, where $T\in(-\infty,\infty]$, $d\in\bN$, and $\bR^{d}_+ := \{(x_1,\dots,x_d)\in\bR^d : x_d>0\}$. We also consider the corresponding Cauchy problems in $\Omega_{0,T}:=(0,T)\times \bR^d_+$. For the elliptic problem \eqref{mainell}, the domain is the upper half space $\bR^d_+$.

Observe that equations \eqref{mainpara} and \eqref{mainell} are stated without boundary conditions, as their solvability in weighted Sobolev spaces does not require such conditions.
Nevertheless, it is possible to study these equations under certain boundary conditions. In such cases, additional assumptions on the forcing term $f$ or on the structure of the equations are necessary (see e.g. \cite[Remark 4.7]{V89}). In this paper, however, our goal is to address the equations under minimal assumptions.

The equations are motivated by several models. One of the most elementary examples is the Euler equation, which is a classical ordinary differential equation: 
\begin{equation} \label{eq8192324}
    -ax^2D_{xx}u + xbD_xu + cu =f \, \text{ in } \, \bR_+,
\end{equation}
where $a,b,c\in\bR$ are constants. For the parabolic equation \eqref{mainpara} with $d=1$, one can refer to the Black--Scholes--Merton equation
\begin{equation*}
    u_t + \frac{1}{2}\sigma^2x^2D_{xx}u + rxD_xu -ru=0 \, \text{ in } \, \Omega_{0,T},
\end{equation*}
where $\sigma,r>0$ are constants.
We are also motivated by degenerate viscous Hamilton--Jacobi equations of the form
\begin{equation*}
    u_t(t,x)-x_d^{\alpha}\Delta u(t,x) +\lambda u(t,x) + H(t,x,D_xu)=0 \text{ in } \Omega_T,
\end{equation*}
where $\alpha>0$, and $H:\Omega_T\times \bR^d\to\bR$ is a given smooth Hamiltonian. Here, equation \eqref{mainpara} corresponds to the special case $H=0$ and $\alpha=2$. Hamilton–Jacobi equations play an important role in optimal control, differential games, and related areas. In particular, regularity theory is a fundamental subject in nonlinear PDEs. We refer the reader to \cite{AT15, DPT24} for more information. Furthermore, the equations are derived from a variety of problems. For instance, \eqref{mainell} is obtained in the linearization of the nonlinear and degenerate Loewner--Nirenberg problem
\begin{equation*}
    \Delta u=\frac{d(d-2)}{4}u^{\frac{d+2}{d-2}} \, \text{ in } \, \bR^d_+.
\end{equation*}
For further derivations in other fields, see \cite{HX24,V89} and the references therein.

Let us illustrate the effect of the coefficients by considering the simplest equation \eqref{eq8192324}. Suppose that the quadratic polynomial
\begin{equation} \label{eq3222323}
    az^2+(b+a)z-c=0,
\end{equation}
which involves both leading and lower-order coefficients, has two distinct real roots $\alpha$ and $\beta$. Then, it is well-known that the general solution of \eqref{eq8192324} is
\begin{equation*}
  u(x)=(A_1(x)+B_1)x^{-\alpha} + (A_2(x)+B_2)x^{-\beta},
\end{equation*}
where $B_1$ and $B_2$ are arbitrary constants, and
\begin{align*}
  A_1(x) := -\frac{1}{a(\beta-\alpha)} \int_0^x y^{\alpha-1} f(y) \, dy, \quad A_2(x) := \frac{1}{a(\beta-\alpha)} \int_0^x y^{\beta-1} f(y) \, dy,
\end{align*}
which shows that the boundary behavior of the solution is influenced not only by $a$ but also by $b$ and $c$.
In other words, unlike nondegenerate equations, the lower-order coefficients must be handled carefully. Moreover, a solution may fail to exist in the standard $L_p(\bR^d_+)$ space. 

In order to address the aforementioned issues, we introduce a class of weights. We present maximal regularity results for solutions in weighted Sobolev spaces. In particular, for the elliptic case \eqref{mainell}, we prove
\begin{equation} \label{eq8202312}
      \int_{\bR^d_+} \left(|(1+\lambda)u|^p + |(1+\sqrt{\lambda})x_dD_xu|^p + |x_d^2D_x^2u|^p \right) x_d^{\theta-1} dx \leq N \int_{\bR^d_+} |f|^p x_d^{\theta-1} dx,
\end{equation}
by considering two distinct cases for $\lambda\geq0$ and $\theta\in\bR$. 
\\
\textbf{Case 1} The case $\lambda=0$ can be treated under the restricted range of $\theta$. In this case, the lower-order coefficients $(b_i)$ and $c$ are ``effective'' in the sense that the admissible range of $\theta$ is determined by the following ratios of coefficients:
\begin{equation*}
  \frac{b_d}{a_{dd}}=n_b, \quad \frac{c}{a_{dd}}=n_c.
\end{equation*}
The precise range is given in Theorem \ref{thmell}, which is optimal in the sense that it is a necessary and sufficient condition for the solvability. The proof of this optimality is presented in Section \ref{sectheta}. 
\\
\textbf{Case 2}
When $\lambda\geq0$ is chosen sufficiently large, one may consider any $\theta\in\bR$ without restriction.

For the parabolic problem \eqref{mainpara}, an analogous result is established in weighted mixed-norm spaces. In the spatial variables, we consider the same class of weights as above, whereas in the time variable, we allow general Muckenhoupt weights.

Regarding $a_{ij}$ in the leading coefficients, we consider a large class of functions. They are assumed to be measurable in $(t,x_d)$ except $a_{dd}$, which is assumed to be measurable in $t$ or $x_d$. In the remaining spatial directions, they have small bounded mean oscillations. For the classical (nondegenerate) heat equation, this class is known to be optimal in the sense that if $a_{ij}$ are merely measurable in $(t,x_d)$ for all $i,j=1,\dots,d$, then there is no unique solvability of parabolic equations. We refer the reader to \cite{K16} for a counterexample, and to \cite{D20,DK18} (and the references therein) for results on the solvability of nondegenerate equations. In this paper, we adopt the same class of coefficients.

We now review related literature on \eqref{mainpara} and \eqref{mainell}.
First, when it comes to $L_p$ regularity results, we refer the reader to \cite{Khalf,K07,MNS22,MNS24}, where the leading coefficients are uniformly continuous. See also \cite{FMP12,V89}, where the estimate \eqref{eq8202312} with $\theta=1$ was obtained. In particular, in these papers, operators of the form $x_d^\alpha\Delta$ with $\alpha\geq2$ were considered.
Recently, in \cite{DR24div}, we studied the divergence form equations corresponding to \eqref{mainpara} and \eqref{mainell}. In both \cite{DR24div} and the present paper, we deal with a substantially larger class of coefficients in weighted mixed-norm spaces than in the earlier works. In particular, in the present paper, we allow a more general assumption on the leading coefficients than \cite{DR24div} by exploiting the structure of the nondivergence form equations (see Remark \ref{rem8202332}). In addition, we establish the optimality of the results, which can also be applied to \cite{DR24div}.

Next, we describe regularity results in H\"older spaces. 
In \cite{V89, GL91, HX24, HX25}, the following equations were studied in a bounded domain instead of in the half space:
\begin{equation*}
\rho^{2s}a_{ij}D_{ij}u+\rho^{s}b_iD_iu+cu=f,
\end{equation*}
where $s\geq1$ and $\rho$ is a regularized distance function. Note that the case $s=1$ is a version of our equations. In \cite{V89}, weighted H\"older theory was introduced for all $s\geq1$. In \cite{GL91,HX24,HX25}, higher-order weighted regularity of solutions was obtained when $s=1$. 

Below we give a brief review on equations with lower-order degeneracy. In \cite{DPT23, DPT24}, for $\alpha<2$, equations with the prototype
\begin{equation*}
  u_t-x_d^\alpha\Delta u +\lambda u=f, \quad u(x',0)=0,
\end{equation*}
 were studied in certain weighted Sobolev spaces. 
 We remark that although \eqref{mainpara} is the limiting case as $\alpha\to2$, our main results cannot be derived by (formally) taking the limit in their results.
 In \cite{LY25, Y25}, H\"older regularity of solutions was obtained when $\alpha<1$.
 See also \cite{DH98, FP13,FMPP07,Koch} for the special case $\alpha=1$.

Lastly, we refer the reader to a series of papers \cite{DP20,DP21TAMS, DP23,DP23JFA,BD24}, where the weights of coefficients of $u_t$ and $D_x^2u$ appear in a balanced way, which plays a crucial role in the analysis and functional space settings.
In particular, in \cite{DP20,DP23JFA}, the elliptic equations of the prototype
\begin{equation*}
  x_d^2\Delta u+\alpha x_d D_du-\lambda x_d^2u=f
\end{equation*}
with boundary conditions were also studied. Note that here the zeroth-order term is $\lambda x_d^2u$ rather than $\lambda u$, which makes boundary conditions necessary.

The paper is organized as follows. In Section \ref{sec_main}, we introduce the weighted Sobolev spaces, assumptions, and our main results.
In Section \ref{secpara}, we provide the proofs of the main results on the parabolic problem (Theorems \ref{thmpara} and \ref{thmfinite}). In Section \ref{sec_ell}, we prove Theorem \ref{thmell}, which concerns the elliptic problem. We address the optimality of our main results in Section \ref{sectheta}.
Lastly, in Section \ref{sec_app}, we show that our main results can be applied to degenerate viscous Hamilton–Jacobi equations when the Hamiltonian is of certain specific forms.

\section{Main results} \label{sec_main}

\subsection{Notation and function spaces} \label{sec_function}

We begin this section by introducing some notation used throughout the paper.
 We use ``$:=$'' or ``$=:$'' to denote a definition.
For nonnegative functions $f$ and $g$, we write $f\approx g$ if there exists a constant $N>0$ such that $N^{-1}f\leq g\leq Nf$.
  By $\bN$, we denote the natural number system. We write $\bN_0:=\bN\cup\{0\}$. As usual, $\bR^d$ stands for the Euclidean space of points $x=(x_1,\dots,x_d)=(x',x_d)$, and we write $\bR:=\bR^1$.
We use $D^n_x u$ to denote the partial derivatives of order $n\in\bN_0$ with respect to the space variables, and $D_xu:=D_x^1u$. We also denote
\begin{equation*}
  D_iu=\frac{\partial u}{\partial x_i}, \quad D_{ij}u=\frac{\partial^2 u}{\partial x_i \partial x_j}.
\end{equation*}

Next, we introduce function spaces. For $p\in[1,\infty)$ and $\theta\in\bR$, we denote by $L_{p,\theta}=L_{p,\theta}(\bR^d_+)$ the set of all measurable functions $u$ defined on $\bR^d_+$ such that
\begin{equation*}
  \|u\|_{L_{p,\theta}}:=\left( \int_{\bR^d_+} |u|^p x_d^{\theta-1} dx \right)^{1/p}<\infty.
\end{equation*}
For $n\in \bN$, we denote the weighted Sobolev space
\begin{equation*}
  H_{p,\theta}^n:=\{u: u, x_dD_xu, \dots, x_d^n D_x^nu \in L_{p,\theta}\},
\end{equation*}
where the norm in $H_{p,\theta}^n$ is given by
\begin{equation} \label{eq2102313}
  \|u\|_{H_{p,\theta}^n} := \left( \sum_{i=0}^n \int_{\bR^d_+}   |x_d^iD_x^iu|^p x_d^{\theta-1} dx \right)^{1/p}.
\end{equation}
We remark that $H_{p,\theta}^n$ corresponds to $H_{p,\theta+d-1}^n$ in \cite{Khalf}.  Moreover, from \cite[Corollary 2.3]{Khalf}, the norm \eqref{eq2102313} is equivalent to
\begin{equation} \label{equivnorm}
  \|u\|_{H_{p,\theta}^n} \approx \left(\sum_{m=-\infty}^\infty e^{m(\theta+d-1)} \|u(e^m\cdot)\zeta\|_{W_p^n(\bR^d)}^p\right)^{1/p},
\end{equation}
where $W_p^n(\bR^d)$ is the (unweighted) Sobolev space of order $n$ and $\zeta\in C_c^\infty(\bR_+)$ is a nonnegative function satisfying
\begin{equation} \label{eq2102340}
  \sum_{n=-\infty}^\infty \zeta^p(e^{x_d-n})\geq1.
\end{equation}

To the best of our knowledge, the spaces $H_{p,\theta}^n$ were firstly introduced in \cite[Section 2.6.3]{ML68} for $p=2$ and $\theta=1$. They were generalized in a unified manner for $p\in(1,\infty)$ and $\theta\in\bR$ in \cite{Khalf} to study stochastic partial differential equations (SPDEs). We refer the reader to \cite{K94,KLline,KLspace}. 
See also \cite{KK2004,KN09,DK15,S24,LLRV25}, where second-order nondegenerate equations are studied.

We now turn to the definitions of mixed-norm spaces.
Let $p,q\geq1$, and $\omega=\omega(t)$ be a weight on $(-\infty,T)$. For functions defined on $\Omega_T=(-\infty,T)\times \bR^d_+$, define
 \begin{equation*}
   \bL_{q,p,\theta,\omega}(T):=L_q((-\infty,T),\omega dt;L_{p,\theta}), \quad \bH_{q,p,\theta,\omega}^n(T):=L_q((-\infty,T), \omega dt;H_{p,\theta}^n).
 \end{equation*}
 Similarly, for functions on $\Omega_{0,T}=(0,T)\times\bR^d_+$, we define
  \begin{equation*}
   \bL_{q,p,\theta,\omega}(0,T):=L_q((0,T),\omega dt;L_{p,\theta}), \quad \bH_{q,p,\theta,\omega}^n(0,T):=L_q((0,T), \omega dt;H_{p,\theta}^n).
 \end{equation*}

 For parabolic equations, we denote $u\in \frH_{q,p,\theta,\omega}^2(T)$ if
 \begin{equation*}
     u,x_dD_xu,x_d^2D_x^2u, u_t \in \bL_{q,p,\theta,\omega}(T),
 \end{equation*}
 and set
 \begin{equation*}
    \|u\|_{\frH_{q,p,\theta,\omega}^2(T)} = \|u\|_{\bH_{q,p,\theta,\omega}^2(T)} + \|u_t\|_{\bL_{q,p,\theta,\omega}(T)}.
 \end{equation*}
By \cite[Theorem 1.19, Remark 5.5]{Khalf}, the spaces $C_c^\infty(\bR^d_+)$, $C_c^\infty((-\infty,T)\times\bR^d_+)$, and $C_c^\infty((-\infty,T]\times\bR^d_+)$ are dense in $H_{p,\theta}^n$, $\bH_{q,p,\theta,\omega}^n(T)$, and $\frH_{q,p,\theta,\omega}^2(T)$, respectively.
For the Cauchy problems, the space $\mathring{\frH}_{q,p,\theta,\omega}^2(0,T)$ is defined by the closure of the set of functions $u\in C_c^\infty([0,T]\times \bR^d_+)$ with $u(0,\cdot)=0$, equipped with the norm
\begin{equation*}
\|u\|_{\mathring{\frH}_{q,p,\theta,\omega}^2(0,T)}:=\|u\|_{\bH_{q,p,\theta,\omega}^2(0,T)} +  \|u_t\|_{\bL_{q,p,\theta,\omega}(0,T)}.
\end{equation*}

In our main result, we consider weights from the $A_q$ Muckenhoupt class.

\begin{definition}
  For $q\in(1,\infty)$, $\omega:\bR \to [0,\infty)$ is said to be in the $A_q(\bR)$ Muckenhoupt class of weights if
  \begin{equation*}
    [\omega]_{A_q(\bR)}:=[\omega]_{A_q}:=\sup_{r>0, t\in\bR} \left( \aint_{t-r}^{t} \omega(s)ds \right)\left( \aint_{t-r}^{t} \omega(s)^{-1/(q-1)} ds \right)^{q-1} <\infty.
  \end{equation*}
\end{definition}

\subsection{Parabolic equations}

In this subsection, we consider parabolic equations.

First, we state the regularity assumptions on the coefficients. The parameters $\rho_0\in(1/2,1)$ and $\gamma_0>0$ will be determined later.

\begin{assumption}[$\rho_0,\gamma_0$] \label{ass_lead}
For every $x_0\in \bR^d_+$ and $\rho\in(0,\rho_0x_{0d}]$, there exist coefficients $[a_{0}]_{\rho, x_0}, [a_{ij}]_{\rho,x_0}$, and $[c_{0}]_{\rho,x_0}$ satisfying \eqref{ellip} and \eqref{rembound}. Moreover,
\begin{itemize}
  \item $[a_{0}]_{\rho, x_0}, [a_{dd}]_{\rho, x_0}$, and $[c_0]_{\rho, x_0}$ -- uniformly in $\rho$ -- either:
  \begin{itemize}
    \item depend only on $x_d$, or 
    \item depend only on $t$,
  \end{itemize}

  \item $[a_{ij}]_{\rho,x_0}$ depend only on $(t,x_d)$ for $(i,j)\neq(d,d)$,

  \item for any $t\in(-\infty,T)$,
   \begin{align} \label{eq2111512}
    &\aint_{B_\rho(x_0)} \Big( \left| a_0(t,y)- [a_0]_{\rho,x_0}(t,y_d) \right| + \left| a_{ij}(t,y)- [a_{ij}]_{\rho,x_0}(t,y_d) \right| \nonumber
    \\
    &\qquad\qquad + \left| c_0(t,y)- [c_0]_{\rho,x_0}(t,y_d) \right| \Big) \, dy < \gamma_0.
  \end{align}
\end{itemize}
\end{assumption}

The following stronger assumption will be imposed to consider the case when $\lambda=0$.

\begin{assumption}[$\rho_0,\gamma_0$] \label{ass_coeff}
For every $x_0\in \bR^d_+$ and $\rho\in(0,\rho_0x_{0d}]$, there exist coefficients $[a_{0}]_{\rho,x_0}, [a_{ij}]_{\rho,x_0}, [b_{i}]_{\rho,x_0}, [c]_{\rho,x_0}$, and $[c_0]_{\rho,x_0}$ satisfying \eqref{ellip}-\eqref{rembound} and the ratio condition
    \begin{eqnarray*}
    \frac{[b_{d}]_{\rho,x_0}}{[a_{dd}]_{\rho,x_0}}=n_b, \quad \frac{[c]_{\rho,x_0}}{[a_{dd}]_{\rho,x_0}}=n_{c}
  \end{eqnarray*}
  for some $n_b, n_c\in\bR$ independent of $x_0$ and $\rho$.
  Moreover,
\begin{itemize}
  \item $[a_{0}]_{\rho, x_0}, [a_{dd}]_{\rho, x_0}, [b_d]_{\rho, x_0}, [c]_{\rho, x_0}$, and $[c_0]_{\rho, x_0}$  -- uniformly in $\rho$ -- either:
  \begin{itemize}
    \item depend only on $x_d$, or
    
    \item depend only on $t$,
  \end{itemize}

  \item $[a_{ij}]_{\rho,x_0}$ depend only on $(t,x_d)$ for $(i,j)\neq (d,d)$,

  \item $[b_{i}]_{\rho,x_0}$ depend only on $(t,x_d)$ for $i\neq d$,

  \item for any $t\in(-\infty,T)$,
  \begin{align} \label{eq2111518}
  &\aint_{B_\rho(x_0)} \Big( \left| a_{0}(t,y)- [a_{0}]_{\rho,x_0}(t,y_d) \right| + \left| a_{ij}(t,y)- [a_{ij}]_{\rho,x_0}(t,y_d) \right|  \nonumber
    \\
    &\qquad\qquad+ \left| b_{i}(t,y)- [b_{i}]_{\rho,x_0}(t,y_d) \right| + \left| c(t,y)- [c]_{\rho,x_0}(t,y_d) \right|  \nonumber
        \\
    &\qquad\qquad + \left| c_0(t,y)- [c_0]_{\rho,x_0}(t,y_d) \right| \Big) \, dy < \gamma_0.
  \end{align}
\end{itemize}
\end{assumption}

In \eqref{eq2111512} and \eqref{eq2111518}, the variables of the coefficients should be interpreted appropriately. For instance, $[a_{dd}]_{\rho,x_0}(t,y_d)$ can be either $[a_{dd}]_{\rho,x_0}(y_d)$ or $[a_{dd}]_{\rho,x_0}(t)$.

Now we state the first main result of our paper, which addresses \eqref{mainpara} on $\Omega_T$.

\begin{theorem} \label{thmpara}
  Let $T\in(-\infty,\infty]$, $p,q\in(1,\infty)$, $\theta\in\bR$, $\omega\in A_q(\bR)$, and $[\omega]_{A_q}\leq K_0$. Suppose that \eqref{ellip}-\eqref{rembound} are satisfied.

$(i)$ There exist
 \begin{eqnarray*}
   \rho_0=\rho_0(d,p,q,\theta,\nu,K,K_0)\in(1/2,1)
 \end{eqnarray*}
 sufficiently close to $1$, a sufficiently small number
 \begin{eqnarray*}
   \gamma_0=\gamma_0(d,p,q,\theta,\nu,K,K_0)>0,
 \end{eqnarray*}
 and a sufficiently large number
  \begin{eqnarray*}
   \lambda_0=\lambda_0(d,p,q,\theta,\nu,K,K_0)\geq 0,
 \end{eqnarray*}
 such that if Assumption \ref{ass_lead} $(\rho_0,\gamma_0)$ is satisfied, then the following assertions hold true.
  For any $\lambda\geq \lambda_0$ and $f \in \bL_{q,p,\theta,\omega}(T)$, there is a unique solution $u\in \frH_{q,p,\theta,\omega}^2(T)$ to \eqref{mainpara}.
Moreover, for this solution, we have
\begin{eqnarray} \label{eq2111600}
  &\|u_t\|_{\bL_{q,p,\theta,\omega}(T)} + (1+\lambda)\|u\|_{\bL_{q,p,\theta,\omega}(T)}& \nonumber
  \\
  &+ (1+\sqrt{\lambda}) \|x_dD_xu\|_{\bL_{q,p,\theta,\omega}(T)} + \|x_d^2D_x^2u\|_{\bL_{q,p,\theta,\omega}(T)}& \nonumber
  \\
  &\leq N \|f\|_{\bL_{q,p,\theta,\omega}(T)},&
\end{eqnarray}
where $N=N(d,p,q,\theta,\nu,K,K_0)$.

$(ii)$ Let $n_b,n_c\in\bR$. Suppose that the quadratic equation 
\begin{equation} \label{eq2152256}
  z^2+(1+n_b)z-n_c=0
\end{equation}
has two distinct real roots $\alpha<\beta$. Then there exist
 \begin{eqnarray*}
   \rho_0=\rho_0(d,p,q,\theta,n_b,n_c,\nu,K,K_0)\in(1/2,1)
 \end{eqnarray*}
 sufficiently close to $1$, and a sufficiently small number
 \begin{eqnarray*}
   \gamma_0=\gamma_0(d,p,q,\theta,n_b,n_c,\nu,K,K_0)>0
 \end{eqnarray*}
 such that under Assumption \ref{ass_coeff} $(\rho_0,\gamma_0)$, the assertions in $(i)$ hold with $\lambda_0=0$ and $\alpha p<\theta<\beta p$, where the dependencies of the constant $N$ are replaced by $d,p,q,\theta,n_b,n_c,\nu,K$, and $K_0$.
\end{theorem}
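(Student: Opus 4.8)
The plan is to obtain, for both parts, an a priori estimate of the form \eqref{eq2111600} together with the solvability of one model equation, and then to run the method of continuity; part~$(ii)$ in addition hinges on a genuine zeroth-order estimate, and this is exactly what forces the window $\alpha p<\theta<\beta p$. The starting observation for the a priori estimate is that on a ball $B_\rho(x_0)$ with $\rho$ comparable to $x_{0d}$ -- the scale at which Assumption~\ref{ass_lead} furnishes a $(t,x_d)$-measurable approximation of the coefficients to within $\gamma_0$ -- one has $x_d\approx x_{0d}$, so that $x_d^2a_{ij}D_{ij}$ is a \emph{uniformly nondegenerate} second-order operator there. Accordingly I would decompose $u$ along the dyadic shells $\{x_d\approx e^m\}$ using the equivalent norm \eqref{equivnorm}, rescale each shell to unit size by the pure spatial dilation $x\mapsto e^mx$, and note that $\sL_p$ turns into a nondegenerate parabolic operator $a_0\partial_tU-\bar a_{ij}D_{ij}U+\bar b_iD_iU+\bar cU+\lambda\bar c_0U$ on a unit region lying away from $\{x_d=0\}$, with $\bar a_{ij}$ uniformly elliptic, measurable in $(t,x_d)$ -- except $\bar a_{dd}$, which remains measurable only in $t$ or only in $x_d$ -- and of small mean oscillation in $x'$. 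Crucially this rescaling touches neither $t$, nor the weight $\omega$, nor $\lambda$, so the factors $1+\lambda$ and $1+\sqrt\lambda$ are carried through verbatim.

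On each unit piece I would invoke the known mixed-norm solvability and mean-oscillation (sharp function) estimates for nondegenerate parabolic equations with leading coefficients of precisely this measurability and with $A_q(\bR)$ weights in time, as developed in \cite{DK18,D20} and the references therein. Patching the local bounds -- undoing the dilation so that $D_xU\leftrightarrow x_dD_xu$ and $D_x^2U\leftrightarrow x_d^2D_x^2u$, summing against the weights $e^{m(\theta+d-1)}$, and running the weighted mixed-norm Fefferman--Stein theorem and the Hardy--Littlewood maximal function theorem with $\omega$ in time and $x_d^{\theta-1}$ in space (as in \cite{Khalf,DR24div}) -- yields, for suitable $\rho_0$ near $1$ and $\gamma_0$ small, an estimate
\begin{align*}
  \|u_t\|_{\bL_{q,p,\theta,\omega}(T)}&+(1+\lambda)\|u\|_{\bL_{q,p,\theta,\omega}(T)}+(1+\sqrt\lambda)\|x_dD_xu\|_{\bL_{q,p,\theta,\omega}(T)}\\
  &+\|x_d^2D_x^2u\|_{\bL_{q,p,\theta,\omega}(T)}\le N\|f\|_{\bL_{q,p,\theta,\omega}(T)}+N\|u\|_{\bL_{q,p,\theta,\omega}(T)}
\end{align*}
for every $\lambda\ge0$ and every $\theta\in\bR$. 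For part~$(i)$, choosing $\lambda_0$ large absorbs the last term into $(1+\lambda)\|u\|_{\bL_{q,p,\theta,\omega}(T)}$ on the left (equivalently, the drift and zeroth-order terms are absorbed into $\lambda c_0u$ from the outset), giving \eqref{eq2111600}; existence follows by the method of continuity, connecting $\sL_p$ to a solvable model such as $u_t-x_d^2\Delta u+\lambda u=f$ along a path that stays within the class of Assumption~\ref{ass_lead}, the model being solvable directly (a Fourier transform in $x'$ and a one-dimensional analysis in $x_d$, or piece by piece from the nondegenerate theory after the same dyadic reduction). Uniqueness is immediate from \eqref{eq2111600}.

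For part~$(ii)$, when $\lambda=0$ the displayed estimate still carries $N\|u\|_{\bL_{q,p,\theta,\omega}(T)}$ on the right, and removing it is precisely where Assumption~\ref{ass_coeff} and $\alpha p<\theta<\beta p$ are used. I would compare $\sL_p$ (with $\lambda=0$) to the one-dimensional Euler operator $\mathscr{E}v:=-\bar a_{dd}x_d^2v''+\bar b_dx_dv'+\bar cv$ on $\bR_+$ obtained by freezing $t$ and $x'$: the ratio condition $\bar b_d/\bar a_{dd}=n_b$, $\bar c/\bar a_{dd}=n_c$ forces its indicial polynomial to equal $\bar a_{dd}\bigl(z^2+(1+n_b)z-n_c\bigr)$, whose roots are exactly the $\alpha<\beta$ of \eqref{eq2152256}, and a Mellin-multiplier computation -- equivalently a direct estimate of the kernel displayed in the introduction -- shows that $\mathscr{E}$ is boundedly invertible on $L_{p,\theta}(\bR_+)$, and in the corresponding mixed norm, precisely when $\alpha p<\theta<\beta p$. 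Treating the discrepancy between $\sL_p$ and this model -- the tangential terms $x_d^2D_{ij}u$ and $x_db_iD_iu$ for $(i,j)\ne(d,d)$ and $i\ne d$, the $t$-dependence, and the small $x'$-oscillation of the coefficients -- as a controlled perturbation, bounded by a small multiple of the higher-order quantities already estimated plus a small multiple of $\|u\|_{\bL_{q,p,\theta,\omega}(T)}$ itself, one obtains $\|u\|_{\bL_{q,p,\theta,\omega}(T)}\le N\|f\|_{\bL_{q,p,\theta,\omega}(T)}$ and hence \eqref{eq2111600} with $\lambda_0=0$; solvability at $\lambda=0$ then follows by writing $\sL_pu=f$ as $\sL_pu+\lambda_0c_0u=f+\lambda_0c_0u$ and running the method of continuity in $\lambda$ from $\lambda_0$ down to $0$.

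I expect the crux to be this last step -- establishing the zeroth-order estimate with the \emph{sharp} range $\alpha p<\theta<\beta p$: pinning down the correct one-dimensional model, proving the exact endpoints of invertibility of $\mathscr{E}$, and controlling the perturbation uniformly across scales so that no portion of the $\theta$-interval is lost, with the optimality argument in Section~\ref{sectheta} confirming that nothing better is possible. A secondary technical burden is checking that after the dyadic rescaling the localized problems genuinely satisfy the hypotheses of the nondegenerate mixed-norm theory -- in particular that only $a_{dd}$, and not the full matrix $(a_{ij})$, is measurable in $(t,x_d)$ -- and carrying the $A_q(\bR)$-in-time, $x_d^{\theta-1}$-in-space weighted harmonic analysis through the patching without deterioration of the constants.
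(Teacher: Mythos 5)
Your part $(i)$ follows essentially the paper's route: the dyadic-shell reduction (in the paper, a continuous dilation $u(t,x/r)$ integrated over $r$) to the nondegenerate $A_q$-weighted theory of \cite{DK18} is exactly Lemma \ref{lem2171542}, and absorbing the resulting $N\|u\|_{\bL_{q,p,\theta,\omega}(T)}$ for $\lambda\geq\lambda_0$ gives \eqref{eq2111600}. Two caveats. First, the solvable endpoint for the method of continuity cannot be obtained ``piece by piece from the nondegenerate theory'' (the shells are coupled through the cutoff errors); the paper obtains it by dividing by $a_{dd}$, rewriting the model equation in divergence form, importing existence in $\bH^1_{q,p,\theta,\omega}(T)$ from \cite[Theorem 2.6]{DR24div}, and then bootstrapping to $\frH^2_{q,p,\theta,\omega}(T)$. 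Second, the case $q\neq p$ is handled by extrapolation from the $p=q$ case with arbitrary $A_p$ weights, not by redoing the mixed-norm harmonic analysis.

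The genuine gap is in part $(ii)$. You propose to prove the zeroth-order estimate by inverting the one-dimensional Euler operator $\mathscr{E}v=-\bar a_{dd}x_d^2v''+\bar b_dx_dv'+\bar cv$ on $L_{p,\theta}$ for $\alpha p<\theta<\beta p$ and then treating $a_0u_t$ and the tangential terms $x_d^2D_{ij}u$, $(i,j)\neq(d,d)$, as ``controlled perturbations bounded by a small multiple of the higher-order quantities already estimated plus a small multiple of $\|u\|$''. This cannot work: those terms carry coefficients bounded below by $K^{-1}$ (resp.\ $\nu$) and are of full strength, so the best available bound is $\|u\|\leq N\|f\|+N\|u\|$ with $N$ not small, which is circular. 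More decisively, the premise is false: by Lemma \ref{lem2152133} and Theorem \ref{thmell} $(iii)$, $\mathscr{E}$ is invertible on $L_{p,\theta}$ for every $\theta\in\bR\setminus\{\alpha p,\beta p\}$, a strictly larger set than $(\alpha p,\beta p)$, and the counterexamples in Section \ref{sectheta} (the heat-kernel computation in step \textbf{2} and the Bessel-function solution in step \textbf{3}) show that it is precisely the time derivative and the tangential Laplacian that cut the admissible range down to the open interval. Terms that change the solvability window cannot be perturbations of the one-dimensional model. The paper's actual route is to prove the zeroth-order estimate for the \emph{full} multi-dimensional operator with coefficients satisfying Assumption \ref{simple2} (measurable in $t$ or $x_d$, ratio condition) by passing to divergence form and invoking \cite[Theorem 2.6]{DR24div} (Lemma \ref{lem2122217}); the only perturbation handled in this paper is the $\gamma_0$-small oscillation in $x'$, via Lemma \ref{lem2131452} and the weighted partition of unity of \cite[Lemma 5.6]{KL13}.
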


\begin{remark}
    We provide some comments on the components in the statement of Theorem \ref{thmpara}.

    $(i)$ The parameter $\rho_0$ is closely related to the construction of a partition of unity in the weighted spaces. Unlike the unweighted case, weighted derivatives of the partition of unity can be made arbitrarily small by choosing $\rho_0$ sufficiently close to $1$ (see \eqref{eq2141547}). This smallness is essential for controlling the localization errors in the a priori estimates.

$(ii)$ The smallness of $\gamma_0$ implies that the coefficients have small bounded mean oscillations in the relevant spatial variables.

$(iii)$ The range of $\theta$ in $(ii)$ of Theorem \ref{thmpara} is determined by $n_b$ and $n_c$. Roughly, as in the ODE case \eqref{eq8192324}, the behavior of the solution is influenced by $a_{dd}, b_d$, and $c$. We also note that \eqref{eq2152256} is equivalent to \eqref{eq3222323} when $d=1$ and the coefficients are constant.

    $(iv)$ In $(i)$ of the theorem, the lower order coefficients $(b_i)$ and $c$ are assumed to be merely bounded measurable, and $\theta\in\bR$ is arbitrary, although we need to choose a sufficiently large $\lambda_0\geq0$.
On the other hand, in $(ii)$, we can take $\lambda=0$ by imposing ratio conditions on the lower-order coefficients and restricting the range of $\theta$.
\end{remark}

\begin{remark} \label{rem8202332}
In \cite[Theorem 2.6 $(ii)$]{DR24div}, the corresponding divergence form equation was studied under the assumption that either

        $-$ $a_0$ depends only on $x_d$ and $[a_{dd}]_{\rho,x_0}$ is constant, or

    $-$ both $a_0$ and $[a_{dd}]_{\rho,x_0}$ depend only on $t$.
 \\   
In contrast, in Theorem \ref{thmpara}, we consider a more general case. For instance, we allow that $[a_{dd}]_{\rho,x_0}$ depends only on $x_d$ and $a_0$ has small bounded mean oscillations in the other spatial variables.
This generalization is possible because we can divide the equation by $a_{dd}$ and thus reduce it to the constant case $a_{dd}=1$. In addition, in our setting,  $a_{0}u_{t}$ belongs to $L_{p}$ space rather than to a negative Sobolev space, which enables us to treat more general class of $a_0$.
\end{remark}

Next we present the Cauchy problems
\begin{equation} \label{mainfinite}
  a_0 u_t - x_d^2 a_{ij}D_{ij}u + x_d b_iD_iu + cu +\lambda c_0 u = f
\end{equation}
in $\Omega_{0,T}:=(0,T)\times\bR^d_+$ with $u(0,\cdot)=0$. In the following result, we handle arbitrary $\lambda\geq0$ by allowing the constant $N$ to depend on $T$.

\begin{theorem} \label{thmfinite}
  Let $T\in(0,\infty)$, $p,q\in(1,\infty)$, $\theta\in\bR$, $\omega\in A_q(\bR)$, and $[\omega]_{A_q}\leq K_0$. Suppose that \eqref{ellip}-\eqref{rembound} are satisfied.
 Then there exist
 \begin{eqnarray*}
\rho_0=\rho_0(d,p,q,\theta,\nu,K,K_0)\in(1/2,1)
 \end{eqnarray*}
 sufficiently close to $1$, and a sufficiently small number
 \begin{eqnarray*}
\gamma_0=\gamma_0(d,p,q,\theta,\nu,K,K_0)>0,
 \end{eqnarray*}
 such that if Assumption \ref{ass_lead} $(\rho_0,\gamma_0)$ is satisfied, then the following assertions hold.
 For any $\lambda\geq 0$ and $f \in \bL_{q,p,\theta,\omega}(T)$, there is a unique solution $u\in \mathring{\frH}_{q,p,\theta,\omega}^2(0,T)$ to \eqref{mainfinite} with zero initial condition $u(0,\cdot)=0$.
Moreover, for this solution, we have
\begin{eqnarray} \label{eq2121343}
  &\|u_t\|_{\bL_{q,p,\theta,\omega}(0,T)} + (1+\lambda)\|u\|_{\bL_{q,p,\theta,\omega}(0,T)}& \nonumber
  \\
  &+ (1+\sqrt{\lambda}) \|x_dD_xu\|_{\bL_{q,p,\theta,\omega}(0,T)} + \|x_d^2D_x^2u\|_{\bL_{q,p,\theta,\omega}(0,T)}& \nonumber
  \\
  &\leq N \|f\|_{\bL_{q,p,\theta,\omega}(0,T)},&
\end{eqnarray}
where $N=N(d,p,q,\theta,\nu,K,K_0,T)$.
\end{theorem}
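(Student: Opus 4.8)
The plan is to deduce the Cauchy-problem result from the already-established Theorem \ref{thmpara} $(i)$ via the standard device of absorbing the large-$\lambda$ requirement into an exponential weight in time, at the cost of a $T$-dependent constant. First I would reduce to the case $\lambda = 0$: if $u$ solves \eqref{mainfinite} with zero initial data, then $v := e^{-\mu t} u$ (for a suitable $\mu$ depending on $\lambda$) satisfies an equation of the same form with $f$ replaced by $e^{-\mu t} f$ and with the zeroth-order coefficient $c$ replaced by $c + \mu a_0 + \lambda c_0$; conversely, multiplying by $e^{\mu t}$ inverts this. Since on the bounded interval $(0,T)$ the factor $e^{\pm\mu t}$ is bounded above and below by constants depending on $\mu$ and $T$, the norms $\|u\|_{\bL_{q,p,\theta,\omega}(0,T)}$ and $\|v\|_{\bL_{q,p,\theta,\omega}(0,T)}$ (and likewise for the derivative terms, and for $u_t$ versus $v_t$ using $u_t = e^{\mu t}(v_t + \mu v)$) are comparable with constants depending on $\mu, T$. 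The Muckenhoupt constant is unaffected since $e^{\pm\mu t}$ has bounded ratio on any interval inside $(0,T)$; more precisely $[\omega e^{\pm\mu t}]_{A_q(0,T)} \le N(\mu,T)[\omega]_{A_q}$, so the hypothesis $[\omega]_{A_q}\le K_0$ is preserved up to a $T$-dependent factor. Thus it suffices to treat a fixed $\lambda$, and we may as well fix the single value $\lambda = \lambda_0$ supplied by Theorem \ref{thmpara} $(i)$ (with $n_b, n_c$ irrelevant here since we are in the regime of part $(i)$), noting that adding $\mu a_0$ to $c$ only changes $K$ to some $K' = K'(\mu, K, \lambda, \lambda_0)$, which is harmless as all constants are allowed to depend on $T$.

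Next I would pass from the half-line domain $\Omega_T = (-\infty,T)\times\bR^d_+$ of Theorem \ref{thmpara} to the finite cylinder $\Omega_{0,T}$ with zero initial condition. For existence: given $f\in\bL_{q,p,\theta,\omega}(0,T)$, extend it by zero to $t<0$, obtaining $\tilde f\in\bL_{q,p,\theta,\omega}(T)$ (the $A_q$ weight restricted suitably, or one extends $\omega$ preserving the Muckenhoupt bound on $(-\infty,T)$ — a standard fact), apply Theorem \ref{thmpara} $(i)$ with the shifted coefficients to get $\tilde u\in\frH^2_{q,p,\theta,\omega}(T)$ solving the equation on $\Omega_T$, and then show $\tilde u\equiv 0$ for $t<0$. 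The latter is where causality enters: on $(-\infty,0)$ the right-hand side vanishes, so $\tilde u$ solves the homogeneous equation there, and the uniqueness part of Theorem \ref{thmpara} $(i)$ applied on $\Omega_0 = (-\infty,0)\times\bR^d_+$ forces $\tilde u = 0$ on that slab. Consequently $\tilde u$ restricted to $(0,T)$ lies in $\mathring{\frH}^2_{q,p,\theta,\omega}(0,T)$ — the vanishing on $\{t<0\}$ together with $\tilde u\in\frH^2$ gives, after a mollification/cutoff in $t$, approximation by smooth functions vanishing near $t=0$, matching the definition of the $\mathring{\frH}^2$ space — and it solves \eqref{mainfinite}. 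The estimate \eqref{eq2121343} is then inherited from \eqref{eq2111600} by restriction, combined with the comparability of norms from the exponential substitution in the previous paragraph.

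For uniqueness on $\Omega_{0,T}$: if $u\in\mathring{\frH}^2_{q,p,\theta,\omega}(0,T)$ solves \eqref{mainfinite} with $f=0$ and $u(0,\cdot)=0$, extend it by zero to $t<0$; because $u$ is approximable by smooth functions vanishing near $t=0$, the zero extension remains in $\frH^2_{q,p,\theta,\omega}$ globally and solves the homogeneous equation on all of $\Omega_T$, whence the uniqueness clause of Theorem \ref{thmpara} $(i)$ gives $u\equiv 0$. The main obstacle — and the step I would write most carefully — is this interplay between the finite-cylinder space $\mathring{\frH}^2_{q,p,\theta,\omega}(0,T)$ and zero-extension to the half-line space $\frH^2_{q,p,\theta,\omega}(T)$: one must verify that zero-extension maps $\mathring{\frH}^2(0,T)$ into $\frH^2(T)$ (a statement about the weak time derivative not picking up a distributional jump at $t=0$, which is exactly guaranteed by the density definition of the $\mathring{}$-space), and conversely that the restriction of a solution vanishing on $\{t<0\}$ lies in $\mathring{\frH}^2(0,T)$. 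Everything else — the exponential shift, the behavior of $A_q$ constants under bounded multiplicative perturbations and under restriction/extension of the time interval, and the harmless modification of the coefficient bounds — is routine and contributes only $T$-dependence to $N$.
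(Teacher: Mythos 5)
Your overall architecture coincides with the paper's: extend $u$ and $f$ by zero to $t\le 0$ (justified by the density definition of $\mathring{\frH}^2_{q,p,\theta,\omega}(0,T)$), perform an exponential shift $v=e^{-\bar\lambda t}u$ to reach the regime of Theorem \ref{thmpara} $(i)$, bound $e^{\pm\bar\lambda t}$ pointwise on $(0,T)$ to get a $T$-dependent constant, and recover the zero initial condition of the constructed solution by a causality argument on a shorter time interval. (Your digression on $[\omega e^{\pm\mu t}]_{A_q}$ is unnecessary — there is no need to touch the weight, only to bound the exponential on $[0,T]$ — but it is not harmful.)

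There is, however, one genuine flaw in the reduction: you absorb $\mu a_0+\lambda c_0$ (or $(\lambda-\lambda_0)c_0+\mu a_0$) into the bounded coefficient $c$, and you assert that the resulting $K'=K'(\mu,K,\lambda,\lambda_0)$ is ``harmless.'' It is not. In Theorem \ref{thmpara} $(i)$ the parameters $\rho_0$, $\gamma_0$, $\lambda_0$ and the constant $N$ all depend on $K$; replacing $K$ by a $\lambda$-dependent $K'$ therefore makes $\rho_0$ and $\gamma_0$ depend on $\lambda$, whereas Theorem \ref{thmfinite} requires a single pair $(\rho_0,\gamma_0)$ fixed \emph{before} $\lambda\ge 0$ is chosen, and requires $N=N(d,p,q,\theta,\nu,K,K_0,T)$ independent of $\lambda$. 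Your bookkeeping also discards the factors $(1+\lambda)\|u\|$ and $(1+\sqrt{\lambda})\|x_dD_xu\|$ on the left of \eqref{eq2121343}. The correct device (the one used in the paper) is to keep the shifted zeroth-order term in the structured form $(\lambda+\bar\lambda)\,\tilde c_0$ with $\tilde c_0:=\frac{\lambda c_0+\bar\lambda a_0}{\lambda+\bar\lambda}$ and $[\tilde c_0]_{\rho,x_0}:=\frac{\lambda[c_0]_{\rho,x_0}+\bar\lambda[a_0]_{\rho,x_0}}{\lambda+\bar\lambda}$: being a convex combination of $a_0$ and $c_0$, this $\tilde c_0$ satisfies \eqref{rembound} with the \emph{same} $K$ and inherits the small mean oscillation of Assumption \ref{ass_lead} with the same $\gamma_0$, so Theorem \ref{thmpara} $(i)$ applies with effective parameter $\lambda+\bar\lambda\ge\lambda_0$, unchanged $(\rho_0,\gamma_0)$, and a $\lambda$-independent $N$; the factors $(1+\lambda+\bar\lambda)$ then dominate $(1+\lambda)$. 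With this correction the rest of your argument (causality via uniqueness on $(-\infty,0)$, and the approximation step showing the restricted solution lies in $\mathring{\frH}^2_{q,p,\theta,\omega}(0,T)$) goes through as in the paper; note also that the paper obtains existence for general $\lambda$ by the method of continuity from a single $\lambda$, which again requires the $\lambda$-uniform a priori estimate you would be missing.
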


\subsection{Elliptic equations}

In this subsection, we present our regularity assumptions on the coefficients and state the main result for elliptic equations.

As in the parabolic case, the parameters $\rho_0\in(1/2,1)$ and $\gamma_0>0$ will be specified later.

\begin{assumption}[$\rho_0,\gamma_0$] \label{ass_ellip_lead}
For every $x_0 \in \bR^d_+$ and $\rho\in(0,\rho_0x_{0d}]$, there exist coefficients $[a_{ij}]_{\rho,x_0}$ and $[c_0]_{\rho,x_0}$ satisfying \eqref{ellip} and \eqref{rembound}.
  Moreover,
\begin{itemize}
    \item $[a_{ij}]_{\rho, x_0}$ and $[c_0]_{\rho, x_0}$ depend only on $x_d$,

  \item we have
  \begin{align*}
    &\aint_{B_\rho(x_0)} \Big( \left| a_{ij}(y)- [a_{ij}]_{\rho,x_0}(y_d) \right| + \left| c_0(y)- [c_0]_{\rho,x_0}(y_d) \right| \Big) \, dy < \gamma_0.
  \end{align*}
\end{itemize}
\end{assumption}

We impose the following stronger assumption on the coefficients for the case when $\lambda=0$.

\begin{assumption}[$\rho_0,\gamma_0$] \label{ass_ellip}
For every $x_0\in \bR^d_+$ and $\rho\in(0,\rho_0x_{0d}]$, there exist coefficients $[a_{ij}]_{\rho,x_0}, [b_{i}]_{\rho,x_0}, [c]_{\rho,x_0}$, and $[c_0]_{\rho,x_0}$ satisfying \eqref{ellip}-\eqref{rembound} and the ratio condition
    \begin{eqnarray*}
    \frac{[b_{d}]_{\rho,x_0}}{[a_{dd}]_{\rho,x_0}}=n_b, \quad \frac{[c]_{\rho,x_0}}{[a_{dd}]_{\rho,x_0}}=n_{c}
  \end{eqnarray*}
  for some $n_b, n_c\in\bR$ independent of $x_0$ and $\rho$.
  Moreover,
\begin{itemize}
    \item $[a_{ij}]_{\rho, x_0}, [b_i]_{\rho, x_0}, [c]_{\rho, x_0}$, and $[c_0]_{\rho,x_0}$ depend only on $x_d$,
    
  \item we have
  \begin{align*}
    &\aint_{B_\rho(x_0)} \Big( \left| a_{ij}(y)- [a_{ij}]_{\rho,x_0}(y_d) \right| + \left| b_{i}(y)- [b_{i}]_{\rho,x_0}(y_d) \right| \nonumber
    \\
    &\qquad\qquad+ \left| c(y)- [c]_{\rho,x_0}(y_d) \right| + \left| c_0(y)- [c_0]_{\rho,x_0}(y_d) \right| \Big) \, dy < \gamma_0.
  \end{align*}
\end{itemize}
\end{assumption}

\begin{theorem} \label{thmell}
Let $p\in(1,\infty)$ and $\theta\in\bR$.  Suppose that \eqref{ellip}-\eqref{rembound} are satisfied.

$(i)$ There exist
 \begin{eqnarray*}
   \rho_0=\rho_0(d,p,\theta,\nu,K)\in(1/2,1)
 \end{eqnarray*}
 sufficiently close to $1$, a sufficiently small number
 \begin{eqnarray*}
   \gamma_0=\gamma_0(d,p,\theta,\nu,K)>0,
 \end{eqnarray*}
 and a sufficiently large number
  \begin{eqnarray*}
   \lambda_0=\lambda_0(d,p,\theta,\nu,K)\geq 0,
 \end{eqnarray*}
 such that under Assumption \ref{ass_ellip_lead} $(\rho_0,\gamma_0)$, the following assertions hold.
For any $\lambda\geq \lambda_0$ and $f \in L_{p,\theta}$, there is a unique solution $u\in H_{p,\theta}^2$ to \eqref{mainell}.
Moreover, for this solution, we have
\begin{align} \label{eq216207}
  & (1+\lambda)\|u\|_{L_{p,\theta}} + (1+\sqrt{\lambda})\|x_dD_xu\|_{L_{p,\theta}} + \|x_d^2D_x^2u\|_{L_{p,\theta}} \leq N \|f\|_{L_{p,\theta}},
\end{align}
where $N=N(d,p,\theta,\nu,K)$.

$(ii)$ Let $n_b,n_c\in\bR$. Suppose that the quadratic equation \eqref{eq2152256} has two distinct real roots $\alpha<\beta$. Then there exist
 \begin{eqnarray*}
\rho_0=\rho_0(d,p,\theta,n_b,n_c,\nu,K)\in(1/2,1)
 \end{eqnarray*}
 sufficiently close to $1$, and a sufficiently small number
 \begin{eqnarray*}
\gamma_0=\gamma_0(d,p,\theta,n_b,n_c,\nu,K)>0
 \end{eqnarray*}
such that under Assumption \ref{ass_ellip} $(\rho_0,\gamma_0)$, the assertions in $(i)$ hold with $\lambda_0=0$ and $\alpha p<\theta<\beta p$, where the dependencies of the constant $N$ are replaced by $d,p,\theta,n_b,n_c,\nu$, and $K$.

$(iii)$ When $d=1$ and $\lambda=0$, the assertions in $(ii)$ hold true for $\theta\in \bR\setminus\{\alpha p,\beta p\}$.
\end{theorem}

\begin{remark}
By dividing \eqref{mainell} by $x_d^2$, the equation can be viewed as a nondegenerate equation with singular lower-order coefficients satisfying a growth condition. Such equations were studied in \cite[Theorem 2.3]{DK15}, which corresponds to a special case of Theorem \ref{thmell} with  $n_b=n_c=0$,
\end{remark}

\section{Parabolic equations} \label{secpara}

In this section, we deal with parabolic equations and give the proofs of Theorems \ref{thmpara} and \ref{thmfinite}.

To prove our main results, the first step is to obtain higher-order derivative estimates for solutions when $p=q$. We write $\bL_{p,\theta,\omega}(T):=\bL_{p,p,\theta,\omega}(T)$, $\bH^n_{p,\theta,\omega}(T):=\bH^n_{p,p,\theta,\omega}(T)$, and $\frH^2_{p,\theta,\omega}(T):=\frH^2_{p,p,\theta,\omega}(T)$.

\begin{lemma} \label{lem2171542}
Let $\rho_0\in(1/2,1)$, $\lambda\geq0$, $T\in(-\infty,\infty]$, $p\in(1,\infty)$, $\theta\in \bR$, $\omega\in A_p(\bR)$, and $[\omega]_{A_p}\leq K_0$. Suppose that \eqref{ellip}-\eqref{rembound} are satisfied.
Assume that $u\in C_c^\infty((-\infty,T]\times\bR^d_+)$ satisfies
\begin{equation*}
a_0 u_t - x_d^2 a_{ij} D_{ij}u + x_d b_i D_i u + cu + \lambda c_0 u = f
\end{equation*}
in $\Omega_T$.
Then there exists sufficiently small
\begin{equation*}
    \gamma_0=\gamma_0(d,p,\nu,K_0)>0
\end{equation*}
 such that if Assumption \ref{ass_lead} $(\rho_0,\gamma_0)$ is satisfied, then we have
\begin{eqnarray} \label{eq2121719}
  &\|u_t\|_{\bL_{p,\theta,\omega}(T)} + (1+\lambda)\|u\|_{\bL_{p,\theta,\omega}(T)}& \nonumber
  \\
  &+ (1+\sqrt{\lambda})\|x_d D_x u\|_{\bL_{p,\theta,\omega}(T)} + \|x_d^2 D_x^2 u\|_{\bL_{p,\theta,\omega}(T)}& \nonumber
  \\
  &\leq N (\|u\|_{\bL_{p,\theta,\omega}(T)} + \|f\|_{\bL_{p,\theta,\omega}(T)}),&
\end{eqnarray}
where $N=N(d,p,\theta,K,\nu,K_0)$.
\end{lemma}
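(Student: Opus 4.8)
The plan is to establish \eqref{eq2121719} by freezing the coefficients locally on dyadic pieces of the half space, transferring each piece to a problem with constant (or one-variable) coefficients via the exponential change of variables $x_d = e^{y}$ implicit in \eqref{equivnorm}, and then summing the local estimates. First I would set up the a priori estimate in the model case: after dividing the equation by $a_{dd}$ (which is legitimate by \eqref{ellip} and which, as noted in Remark \ref{rem8202332}(i), only affects $a_0$, $b_i$, $c$, $c_0$ by bounded measurable factors), one reduces to $a_{dd}\equiv 1$. In this normalized setting the leading coefficients are measurable in $(t,x_d)$ except the $(d,d)$ entry, which is now constant; this is exactly the structural hypothesis under which the nondegenerate mixed-norm theory of \cite{D20,DK18} applies after the substitution $x_d=e^y$, $u(t,x',x_d)=v(t,x',y)$, which turns $x_d^2 D_{dd}$ into $D_{yy}-D_y$, $x_d D_d$ into $D_y$, and leaves $x_d^2 D_{i j}$ ($i,j<d$) and $x_d D_i$ ($i<d$) as the appropriately scaled tangential operators. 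The weight $x_d^{\theta-1}dx$ becomes $e^{\theta y}dy$, i.e.\ an exponential weight in the new $y$ variable, which is an $A_p$ weight on each bounded $y$-interval with a controlled characteristic; combined with $\omega(t)\in A_p$ in time, the product weight lies in $A_p(\bR^{d+1})$ with characteristic bounded in terms of $[\omega]_{A_p}$ and $\theta$. So the first key step is: quote the nondegenerate weighted mixed-norm solvability/estimate (with the parameter $\lambda$) for the transformed operator on $\bR\times\bR^{d-1}\times\bR$, and read it back through the change of variables to get the model estimate
\begin{equation*}
  \|u_t\|_{\bL_{p,\theta,\omega}} + (1+\lambda)\|u\|_{\bL_{p,\theta,\omega}} + (1+\sqrt\lambda)\|x_dD_xu\|_{\bL_{p,\theta,\omega}} + \|x_d^2D_x^2u\|_{\bL_{p,\theta,\omega}} \leq N\|f\|_{\bL_{p,\theta,\omega}}
\end{equation*}
when the coefficients depend only on $(t,x_d)$ (with $[a_{dd}]$ on $t$ or $x_d$ only) and the tangential part is frozen; the $(1+\lambda)$ and $(1+\sqrt\lambda)$ gains come from the standard rescaling of the $\lambda c_0 u$ term, using $K^{-1}\le c_0\le K$ from \eqref{rembound}.

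Next I would run the standard perturbation/localization argument to pass from the model case to Assumption \ref{ass_lead}$(\rho_0,\gamma_0)$. Fix $x_0$, work on the ball $B_{\rho}(x_0)$ with $\rho\le\rho_0 x_{0d}$ (so $x_d\approx x_{0d}$ there and the degeneracy is harmless), and write $a_{ij} = [a_{ij}]_{\rho,x_0} + (a_{ij}-[a_{ij}]_{\rho,x_0})$, similarly for $a_0,c_0$. The oscillation terms are moved to the right-hand side; their contribution is controlled by $\gamma_0$ times the full second-order/time norm on $B_\rho(x_0)$ using the mean-oscillation bound \eqref{eq2111512} together with a Hölder inequality in the tangential variables and the observation that on $B_\rho(x_0)$ the weight $x_d^{\theta-1}$ is comparable to the constant $x_{0d}^{\theta-1}$. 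One then uses a partition of unity subordinate to a Whitney-type covering of $\bR^d_+$ by such balls (this is precisely the content of the equivalence \eqref{equivnorm}: the pieces $u(e^m\cdot)\zeta$), applies the model estimate to $\varphi_k u$ on each piece, absorbs the commutator terms $[\sL,\varphi_k]u$ (which are lower order in the scaled derivatives and hence, after another application of interpolation for the weighted spaces $H^2_{p,\theta}$, absorbable or dominated by $\|u\|_{\bL_{p,\theta,\omega}}$), chooses $\gamma_0$ small to absorb the $\gamma_0$-terms into the left side, and sums in $k$ using $\ell^p$-summability of the covering. The low-order terms $x_d b_i D_i u + cu$ are genuinely lower order in the $x_d$-weighted scaling ($x_d b_i D_i u$ costs only one factor of $x_d D$, not two), so by interpolation they are bounded by $\epsilon\|x_d^2 D_x^2 u\|_{\bL_{p,\theta,\omega}} + N_\epsilon\|u\|_{\bL_{p,\theta,\omega}}$ and can be absorbed on the left; this is why in Lemma \ref{lem2171542} no ratio condition on $b_d,c$ is needed and $\theta$ is unrestricted, at the cost of the residual $\|u\|_{\bL_{p,\theta,\omega}}$ on the right.

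The step I expect to be the main obstacle is the model estimate itself --- specifically, getting the sharp $\lambda$-dependence $(1+\lambda)$, $(1+\sqrt\lambda)$ simultaneously with the mixed-norm/weighted framework for coefficients that are only measurable in $(t,x_d)$ (off the $(d,d)$-entry). After the substitution $x_d=e^y$ the zeroth-order term becomes $\lambda c_0 v$ with $c_0$ still merely measurable, and one cannot simply rescale it away while keeping the exponential weight; the cleanest route is to treat $\lambda c_0 u$ as $\lambda c_0 u = \lambda u + \lambda(c_0-1)u$ only when $c_0$ is close to a constant, but in general one must carry $c_0$ through the freezing argument as well (hence its inclusion in Assumption \ref{ass_lead} and in \eqref{eq2111512}), freezing $c_0$ to $[c_0]_{\rho,x_0}$ and handling $[c_0]$ depending on $t$ or $x_d$ by the same one-variable-coefficient machinery that handles $a_{dd}$. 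Verifying that the resulting one-dimensional (in $y$) constant/measurable-coefficient resolvent estimates hold with the claimed power of $\lambda$ in the weighted $L_p(e^{\theta y}dy)$ norm --- e.g.\ via an explicit Green's function analysis for the ODE $-v'' + v' + \lambda \tilde c_0 v = g$ or via the $\mathcal{R}$-boundedness/$H^\infty$-calculus for the corresponding operator pencil --- is the technical heart of the matter; everything else is routine perturbation and summation. I would therefore isolate that one-dimensional resolvent estimate as a separate lemma and prove the present Lemma \ref{lem2171542} modulo it.
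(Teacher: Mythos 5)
There is a genuine gap at the heart of your ``model estimate.'' After the substitution $x_d=e^{y}$ the measure $x_d^{\theta-1}\,dx$ becomes $e^{\theta y}\,dx'\,dy$, and an exponential weight is \emph{not} an $A_p(\bR)$ weight for $\theta\neq 0$ (its $A_p$ characteristic over an interval of length $r$ blows up as $r\to\infty$), so the product weight does not lie in $A_p(\bR^{d+1})$ with a controlled characteristic and you cannot quote the nondegenerate weighted mixed-norm theory of \cite{DK18} globally on the transformed space. This is precisely the obstruction the paper's proof is built to avoid: instead of a global change of variables, one multiplies by a cutoff $\zeta\in C_c^\infty((2,3))$, dilates, $v(t,x):=\zeta(x_d)u(t,x/r)$, applies \cite[Theorem 6.3]{DK18} to $v$ on the fixed strip $x_d\in(2,3)$ (where the degenerate operator is uniformly nondegenerate, the only weight in play is $\omega(t)\in A_p$, and \eqref{eq2111512} gives the required small mean oscillation of $x_d^2a_{ij,r}$ on small parabolic cylinders), and only then recovers the power weight $x_d^{\theta-1}$ by raising to the $p$-th power and integrating in $r$ against $r^{-\theta-d}\,dr$, via the identity $\int_0^\infty\bigl(\int_{\Omega_T}|\zeta(x_d)h_r|^p\omega\,dx\,dt\bigr)r^{\gamma}\,dr=N\int_{\Omega_T}|h|^p\omega\,x_d^{-\gamma-d-1}\,dx\,dt$. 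Your sketch never supplies a substitute for this step, and as written the key estimate you propose to ``read back through the change of variables'' is not available.

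A second, lesser point: the freezing/perturbation machinery you describe (absorbing $\gamma_0$-oscillation terms, partition of unity with frozen coefficients, ratio conditions) is not what this lemma requires and in fact cannot be run here, since absorbing the frozen-coefficient differences presupposes solvability for the frozen operator, which is established only later (Lemmas \ref{lem2122217} and \ref{lem2131452}). Lemma \ref{lem2171542} is merely the a priori higher-derivative estimate with $\|u\|_{\bL_{p,\theta,\omega}(T)}$ retained on the right: the lower-order terms $x_db_iD_iu$ and $cu$ and the commutators with $\zeta$ are simply dumped into the right-hand side $g$, bounded by $\|u\|+\|x_dD_xu\|$, and the gradient term is then absorbed using the interpolation inequality \eqref{eq2152354} and Young's inequality. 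Your instinct that the lower-order terms are harmless by interpolation is correct, but the route to the core estimate needs the dilation-integration device (or an equivalent use of \eqref{equivnorm} that only ever invokes the nondegenerate theory on fixed strips), not a global exponential change of variables.
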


\begin{proof}
For any function $h$ defined on $\Omega_T$, we write $h_r(t,x):=h(t,x/r)$.
  Let $\zeta\in C_c^\infty((2,3))$ be a standard nonnegative cutoff function.
Note that $v(t,x):=\zeta(x_d)u_r(t,x) \in W_{p,\omega}^{1,2}((-\infty,T)\times \bR^d)$, where
  \begin{equation*}
    W_{p,\omega}^{1,2}(T) = \{u: u,D_xu, D_x^2u, u_t \in L_{p}((-\infty,T)\times \bR^d,\omega(t)dtdx)\}.
  \end{equation*}
We see that $v$ satisfies
  \begin{equation} \label{eq2121809}
    a_{0,r}v_t-x_d^2 a_{ij,r}D_{ij}v+ (\Lambda+\lambda)c_{0,r} v=g
  \end{equation}
  in $(-\infty,T)\times\bR^d$, where $\Lambda\geq0$ and
  \begin{align} \label{eq8142335}
    g&:=\zeta f_r - x_d^2 a_{dd,r}\zeta''u_r-x_d^2\sum_{i\neq d} (a_{id,r}+a_{di,r})\zeta' D_iu_r \nonumber
    \\
    &\quad-x_d b_{i,r} \zeta D_iu_r - c_r\zeta u_r + \Lambda c_{0,r}\zeta u_r.
  \end{align}

Using \eqref{eq2111512}, one can show that
there is $R_0>0$ such that the (unweighted) oscillation of $x_d^2 a_{ij,r}$ on $Q_{R}(t,x):=(t-R^2,t)\times B_R(x)$ is less than $N\gamma_0$ if $x_d\in(1,4)$ and $R\leq R_0$.
By \cite[Theorem 6.3]{DK18}, there is $\Lambda_0=\Lambda_0(d,p,\nu,K_0)\geq0$ such that if $\Lambda=\Lambda_0$, then we can apply the $W_{p,\omega}^{1,2}$-estimate for the equation \eqref{eq2121809} to get
  \begin{eqnarray} \label{eq2121810}
    &\|v_t\|_{L_{p,\omega}(T)} + (\Lambda_0+\lambda) \|v\|_{L_{p,\omega}(T)} + \sqrt{\Lambda_0+\lambda} \|D_xv\|_{L_{p,\omega}(T)} + \|D^2_xv\|_{L_{p,\omega}(T)}& \nonumber
    \\
    &\leq N \|g\|_{L_{p,\omega}(T)},&
  \end{eqnarray}
  where $L_{p,\omega}(T):=L_{p}((-\infty,T)\times \bR^d,\omega(t)dtdx)$.
  Here, we remark that the arguments in \cite{DK18} still work for equations with general coefficients $a_0$ and $c_0$, and thus we can still utilize the estimate established in the paper (see also \cite[Remark 3.7]{DR24div}).
Note that for any $\gamma\in\bR$,
  \begin{align*}
    \int_0^\infty \left( \int_{\Omega_{T}} |D_d^j\zeta(x_d)h_r(t,x)|^p \omega(t) dxdt \right) r^\gamma dr &= N_j \int_{\Omega_T} |h(t,x)|^p \omega(t) x_d^{-\gamma-d-1} dxdt,
  \end{align*}
  where 
  \begin{equation*}
    N_j:=\int_0^\infty |D_s^i\zeta(s)|^p s^{\gamma+d} ds, \quad j=0,1,2.
  \end{equation*}
  By raising both sides of \eqref{eq2121810} to the power of $p$, multiply by $r^{-\theta-d}$, and integrating with respect to $r$ on $(0,\infty)$, we obtain that
\begin{equation} 
	\begin{aligned}
		  &\|u_t\|_{\bL_{p,\theta,\omega}(T)} + (1+\lambda)\|u\|_{\bL_{p,\theta,\omega}(T)}+ (1+\sqrt{\lambda})\|x_d D_x u\|_{\bL_{p,\theta,\omega}(T)} + \|x_d^2 D_x^2u\|_{\bL_{p,\theta,\omega}(T)}\\
		&\leq N \left( \|f\|_{\bL_{p,\theta,\omega}(T)} + \|u\|_{\bL_{p,\theta,\omega}(T)} + \|x_dD_xu\|_{\bL_{p,\theta,\omega}(T)} \right).
	\end{aligned}
	\label{eq2122053}
\end{equation}
Note that by the interpolation inequality in the whole space,
\begin{equation*}
  \|h\|_{W_p^1(\bR^d)} \leq N\|h\|_{W_p^2(\bR^d)}^{1/2} \|h\|_{L_p(\bR^d)}^{1/2}.
\end{equation*}
By this and the relation \eqref{equivnorm}, we obtain
\begin{equation} \label{eq2152354}
  \begin{aligned}
  	  &\|x_dD_xh\|_{L_{p,\theta}} \leq \|h\|_{H^1_{p,\theta}}\leq N\left( \sum_{m=-\infty}^\infty e^{m(\theta+d-1)}\|h(t,e^m\cdot)\zeta\|_{W_p^1(\bR^d)}^p \right)^{1/p}\\
  	&\leq N \left( \sum_{m=-\infty}^\infty e^{m(\theta+d-1)}\|h(t,e^m\cdot)\zeta\|_{W_p^2(\bR^d)}^{p/2} \|h(t,e^m\cdot)\zeta\|_{L_{p}(\bR^d)}^{p/2} \right)^{1/p}\\
  	&\leq N\|h\|_{H^2_{p,\theta}}^{1/2}\|h\|_{L_{p,\theta}}^{1/2},
  \end{aligned}
\end{equation}
which yields that
\begin{equation*}
  \|x_dD_xu\|_{\bL_{p,\theta,\omega}(T)} \leq \|u\|_{\bH^1_{p,\theta,\omega}(T)} \leq N\|u\|_{\bH^2_{p,\theta,\omega}(T)}^{1/2} \|u\|_{\bL_{p,\theta,\omega}(T)}^{1/2}.
\end{equation*}
This and \eqref{eq2122053} together with Young's inequality lead to the desired estimate \eqref{eq2121719}. 
The lemma is proved.
\end{proof}

Our next step is to prove Lemma \ref{lem2122217}, where equations with simple coefficients are considered. 
Before we present this, we give some definitions which are used in the proof of the lemma.

We present the definition of weak solutions to
\begin{equation} \label{eqdiv}
    a_0u_t-x_d^2D_i(a_{ij}D_ju) + x_db_i D_iu + cu + \lambda c_0u = f.
\end{equation}

\begin{definition}
  Let $p,q\in (1,\infty)$, $\theta\in\bR$, $T\in(-\infty,\infty]$, $\omega\in A_q(\bR)$, and $f\in \bL_{q,p,\theta,\omega}(T)$. 

  $(i)$ In the case when $a_0=a_0(x_d)$, we say that $u\in \bH^1_{q,p,\theta,\omega}(T)$ is a weak solution to \eqref{eqdiv} if
  \begin{align*}
    &-\int_{\Omega_T} a_0 u\varphi_t dxdt + \int_{\Omega_T} a_{ij} D_juD_{i}(x_d^2\varphi) dxdt + \int_{\Omega_T} x_d b_i \varphi D_iudxdt 
    \\
    &\quad + \int_{\Omega_T} c u\varphi dxdt + \int_{\Omega_T} \lambda c_0 u\varphi dxdt = \int_{\Omega_T} f\varphi dxdt
  \end{align*}
  for any $\varphi \in C_c^\infty((-\infty,T)\times \bR^d_+)$.

  $(ii)$ In the case when $a_0=a_0(t)$, we say that $u\in \bH^1_{q,p,\theta,\omega}(T)$ is a weak solution to \eqref{eqdiv} if
  \begin{align*}
    &-\int_{\Omega_T} u\varphi_t dxdt + \int_{\Omega_T} \frac{a_{ij}}{a_0} D_juD_{i}(x_d^2\varphi) dxdt + \int_{\Omega_T} x_d \frac{b_i}{a_0} \varphi D_iu dxdt 
    \\
    &\quad + \int_{\Omega_T} \frac{c}{a_0} u\varphi dxdt + \int_{\Omega_T} \lambda \frac{c_0}{a_0} u\varphi dxdt = \int_{\Omega_T} \frac{1}{a_0} f\varphi dxdt
  \end{align*}
  for any $\varphi \in C_c^\infty((-\infty,T)\times \bR^d_+)$.
\end{definition}

Next, we introduce two classes of simple coefficients, each aligned with the $[\cdot]_{\rho,x_0}$-type coefficients in Assumptions \ref{ass_lead} $(\rho_0,\gamma_0)$ and \ref{ass_coeff} $(\rho_0,\gamma_0)$, respectively.

\begin{assumption} \label{simple1}
\textbf{} 
   \begin{itemize}
    \item $a_0, a_{dd}$, and $c_0$ depend on the same single variable, either $x_d$ or $t$,

  \item $a_{ij}$ depend only on $(t,x_d)$ for $(i,j)\neq (d,d)$,
\end{itemize}
\end{assumption}

The following is a stronger assumption.

\begin{assumption} \label{simple2}
\textbf{} 
   \begin{itemize}
    \item $a_0, a_{dd}, b_d, c$, and $c_0$ depend on the same single variable, either $x_d$ or $t$,

  \item $a_{ij}$ depend only on $(t,x_d)$ for $(i,j)\neq (d,d)$,

  \item $b_i$ depend only on $(t,x_d)$ for $i\neq d$,

  \item we have
      \begin{equation} \label{eq2171340}
    \frac{b_d}{a_{dd}}=n_b, \quad \frac{c}{a_{dd}}=n_c
  \end{equation}
  for some $n_b, n_c\in \bR$.
\end{itemize}
\end{assumption}

\begin{lemma} \label{lem2122217}
      Let $T\in(-\infty,\infty]$, $\lambda\geq0$, $p,q\in(1,\infty)$, $\omega\in A_q(\bR)$, and $[\omega]_{A_q}\leq K_0$. Suppose that \eqref{ellip}-\eqref{rembound} are satisfied.

(i) There exists $\lambda_0 = \lambda_0(d,p,q,\theta,\nu,K,K_0)\geq0$ such that under Assumption \ref{simple1}, the following assertions hold true. For any $\lambda\geq\lambda_0$ and $f\in \bL_{q,p,\theta,\omega}(T)$, there is a unique solution $u\in \frH_{q,p,\theta,\omega}^2(T)$ to \eqref{mainpara}. Moreover, for this solution,
\begin{eqnarray} \label{eq2122338}
  &\|u_t\|_{\bL_{q,p,\theta,\omega}(T)} + (1+\lambda)\|u\|_{\bL_{q,p,\theta,\omega}(T)}& \nonumber
  \\
  &+ (1+\sqrt{\lambda})\|x_d D_x u\|_{\bL_{q,p,\theta,\omega}(T)} + \|x_d^2 D_x^2 u\|_{\bL_{q,p,\theta,\omega}(T)}& \nonumber
  \\
  &\leq N \|f\|_{\bL_{q,p,\theta,\omega}(T)}.&
\end{eqnarray}
where $N=N(d,q,p,\theta,\nu,K,K_0)$.

(ii)
Let $n_b,n_c\in\bR$, and suppose that the quadratic equation \eqref{eq2152256} has two distinct real roots $\alpha<\beta$.
Then under Assumption \ref{simple2}, the assertions in (i) hold with $\lambda_0=0$ and $\alpha p<\theta<\beta p$,
where the dependencies of $N$ are replaced by $d,q,p,\theta,n_b,n_c,\nu,K$, and $K_0$.
\end{lemma}

\begin{proof}
We present the proofs for $(i)$ and $(ii)$ together.

First, we prove the existence result for the case when $q=p$.
By dividing the equation \eqref{mainpara} by $a_{dd}$, and then absorbing $a_{dd}$ into $a_0$, we may assume that $a_{dd}=1$. Thus, the equation can be rewritten into a divergence form equation
   \begin{equation} \label{eqtrans}
     a_{0}u_t - x_d^2 D_{i}(\bar{a}_{ij} D_j u) + x_d b_i D_i u +cu +\lambda c_0 u= f,
   \end{equation}
   where
   \begin{equation*}
     \bar{a}_{ij}:=\left\{\begin{array}{ll}
       a_{ij}+a_{ji} &\text{ for } i\neq d \text{ and } j=d;
       \\
       0 &\text{ for } i= d \text{ and } j\neq d;
       \\
       a_{ij} &\text{ otherwise}.
     \end{array} \right.
   \end{equation*}
   Here, in the case when Assumption \ref{simple2} holds, the ratio condition \eqref{eq2171340} is still satisfied. This implies that Assumption $2.2$ $(\rho_0,\gamma_0)$ or $2.3$ $(\rho_0,\gamma_0)$ in \cite{DR24div} is satisfied.
  Hence, by \cite[Theorem 2.6]{DR24div}, there exists $\lambda_0\geq0$ such that for any $\lambda \geq \lambda_0$, one can find a solution $u\in \bH_{p,\theta,\omega}^1(T)$ to \eqref{eqtrans} satisfying
\begin{equation} \label{eq2122344}
(1+\sqrt{\lambda})\|u\|_{\bL_{p,\theta,\omega}(T)} + \|x_dD_xu\|_{\bL_{p,\theta,\omega}(T)} \leq \frac{N}{1+\sqrt{\lambda}} \|f\|_{\bL_{p,\theta,\omega}(T)}.
\end{equation}

Now we show that this solution $u$ is actually in $\frH_{p,\theta,\omega}^2(T)$.
 Let us consider $v(t,x):=\zeta(x_d) u_r(t,x) := \zeta(x_d) u(t,x/r)$ where $\zeta\in C_c^\infty((2,3))$ is a standard nonnegative cutoff function. Then we observe that $v$ satisfies
    \begin{align} \label{eq2122354}
    &a_{0,r}v_t-x_d^2 a_{ij,r}D_{ij}v+ (\Lambda +\lambda) c_{0,r} v \nonumber
    \\
    &= a_{0,r}v_t-x_d^2 D_i(\bar{a}_{ij,r}D_{j}v)+ (\Lambda +\lambda) c_{0,r} v = g,
  \end{align}
   where $\Lambda\geq0$, and $g$ is defined as \eqref{eq8142335}.
 Here, $h_r(t,x):=h(t,x/r)$ for any function $h$ on $\Omega_T$. In particular, $v \in \cH_{p,\omega}^1(T)$ where $\cH_{p,\omega}^1(T)$ is the set of functions such that
 $v,D_xv \in L_{p}((-\infty,T)\times \bR^d,\omega(t)dtdx)$ and $a_{0,r}h_t \in L_{p}((-\infty,T),\omega dt; H_p^{-1}(\bR^d))$.
Since $g\in L_{p}((-\infty,T)\times \bR^d,\omega(t)dtdx)$, by \cite[Theorem 6.2]{DK18}, we have a solution to \eqref{eq2122354} in the space $W_{p,\omega}^{1,2}(T)$, say $\tilde{v}$. Since $\tilde{v}$ is also in the space $\cH_{p,\omega}^1(T)$, the uniqueness result in $\cH_{p,\omega}^1(T)$ (see also \cite[Remark 3.7]{DR24div}) yields that $v=\tilde{v}\in W_{p,\omega}^{1,2}(T)$, and
\begin{eqnarray} \label{eq2152234}
    &\|v_t\|_{L_{p,\omega}(\Omega_T)} + (1+\lambda)\|v\|_{L_{p,\omega}(\Omega_T)}+ (1+\sqrt{\lambda})\|D_x v\|_{L_{p,\omega}(\Omega_T)} + \| D_x^2 v\|_{L_{p,\omega}(\Omega_T)}& \nonumber
  \\
  &\leq N \|g\|_{L_{p,\omega}(\Omega_T)}.&
\end{eqnarray}
As in \eqref{eq2122053}, we raise both sides of \eqref{eq2152234} to the power of $p$, multiply by $r^{-\theta-d}$, and integrate with respect to $r$. Then by \eqref{eq2122344},
\begin{align*}
  &\|u_t\|_{\bL_{p,\theta,\omega}(T)} + (1+\lambda)\|u\|_{\bL_{p,\theta,\omega}(T)}+ (1+\sqrt{\lambda})\|x_d D_x u\|_{\bL_{p,\theta,\omega}(T)} + \|x_d^2 D_x^2 u\|_{\bL_{p,\theta,\omega}(T)}
  \\
  &\leq N \left(\|f\|_{\bL_{p,\theta,\omega}(T)} + \|u\|_{\bL_{p,\theta,\omega}(T)} + \|x_dD_xu\|_{\bL_{p,\theta,\omega}(T)}\right) \leq N\|f\|_{\bL_{p,\theta,\omega}(T)}.
\end{align*}
Thus, $u\in \frH_{p,\theta,\omega}^2(T)$, and it satisfies \eqref{eq2122338}.

Before considering the general case $q\neq p$, we remark that $u$ is independent of $p,\theta$, and $\omega$ when $f\in C_c^\infty(\Omega_T)$. In other words, for $p_i\in(1,\infty)$, $\omega_i\in A_{p_i}(\bR)$, and $\theta_i\in \bR$ ($\theta_i \in (\alpha p_i,\beta p_i)$ for the case $(ii)$), if $u_i \in \frH_{p_i,\theta_i,\omega_i}^2(T)$ are two solutions to \eqref{mainpara} with $f\in C_c^\infty(\Omega_T)$, then $u_1=u_2$. This fact easily follows from \cite[Remark 4.6]{DR24div}, where the independence for divergence form equation \eqref{eqtrans} is addressed.

Next, we deal with the existence when $q\neq p$.
Let $f\in C_c^\infty(\Omega_T)$, $\omega'\in A_p(\bR)$, and take a solution $u\in \frH_{p,\theta,\omega'}^2(T)$. Since $u$ is independent of $\omega'$, it satisfies the estimate \eqref{eq2122338} with $q=p$ and any $\omega' \in A_p(\bR)$. This and the extrapolation theorem (see e.g. \cite[Theorem 2.5]{DK18}) yield that $u\in \frH_{q,p,\theta,\omega}^2(T)$, and \eqref{eq2122338} for general $q\neq p$ and $\omega \in A_q(\bR)$. 

Lastly, we consider the uniqueness result. Let $u \in \frH_{q,p,\theta,\omega}^2(T)$ be a solution to \eqref{mainpara} with $f=0$. 
As in the first case $p=q$, we may assume that $a_{dd}=1$, and $u$ satisfies the divergence form equation \eqref{eqtrans}. By the uniqueness result from \cite[Theorem 2.6]{DR24div}, we have $u=0$.
The lemma is proved.
\end{proof}

\begin{lemma} \label{lem2131452}
  Let $T\in(-\infty,\infty]$, $\rho_0\in(1/2,1)$, $\gamma_0>0$, $p\in(1,\infty)$, $\omega\in A_q(\bR)$, and $[\omega]_{A_q}\leq K_0$. Suppose that \eqref{ellip}-\eqref{rembound} are satisfied.
  Assume that $u\in \frH_{p,\theta,\omega}^2(T)$ satisfies \eqref{mainpara} where $f\in \bL_{p,\theta,\omega}(T)$, and $u$ and $f$ are compactly supported on $(-\infty,T]\times B_{\rho_0}(x_0)$ for some $x_0\in \bR^d_+$ with $x_{0d}=1$.

  (i)
 Let $p_1 \in (1,p)$. Then there exists
 \begin{equation*}
\lambda_0=\lambda_0(d,p,p_1,\theta,\nu,K,K_0)\geq0
 \end{equation*}
 such that under Assumption \ref{ass_lead} $(\rho_0,\gamma_0)$, the following assertion holds true.
  For any $\lambda\geq\lambda_0$ and $\varepsilon\in(0,1)$,
  \begin{align} \label{eq2131519}
    \|u\|_{\bL_{p,\theta,\omega}(T)}  &\leq (\varepsilon + N_{\rho_0,\varepsilon} \gamma_0^{(p-p_1)/pp_1})(\lambda\|u\|_{\bL_{p,\theta,\omega}(T)} + \|u\|_{\frH_{p,\theta,\omega}^2(T)}) \nonumber
    \\
    &\quad+ N \|f\|_{\bL_{p,\theta,\omega}(T)},
  \end{align}
    where $N$ depends only on $d,p,p_1,\theta,\nu,K$, and $K_1$, and $N_{\rho_0,\varepsilon}$ depends only on $d,p,p_1,\theta,\nu,K,K_0,\rho_0$, and $\varepsilon$.

    (ii) 
Let $n_b,n_c\in\bR$ and the quadratic equation \eqref{eq2152256} has two distinct real roots $\alpha$ and $\beta$. Let $p_1\in(1,p)$ and $\theta\in(\alpha p,\beta p)$ such that $\theta\in(\alpha p_1,\beta p_1)$.
   Then under Assumption \ref{ass_coeff} $(\rho_0,\gamma_0)$, the assertion in $(i)$ holds with $\lambda_0=0$, where the constant $N$ depends only on $d,p,p_1,\theta,n_b,n_c,\nu,K$, and $K_0$, and $N_{\rho_0,\varepsilon}$ depends only on $d,p,p_1,\theta,n_b,n_c,\nu,K,K_0,\rho_0$, and $\varepsilon$.
\end{lemma}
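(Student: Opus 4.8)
The plan is to reduce the variable-coefficient equation in a small ball to the simple-coefficient (frozen) case covered by Lemma \ref{lem2122217}, estimating the error by the small oscillation hypothesis. First I would freeze the coefficients at $x_0$: replace $a_{ij}, a_0, c_0$ (and $b_i, c$ in case $(ii)$) by their $[\cdot]_{\rho_0,x_0}$ counterparts, which satisfy Assumption \ref{simple1} (resp. \ref{simple2}), and write the equation as
\begin{equation*}
  [a_0]_{\rho_0,x_0}u_t - x_d^2[a_{ij}]_{\rho_0,x_0}D_{ij}u + x_d[b_i]_{\rho_0,x_0}D_iu + [c]_{\rho_0,x_0}u + \lambda[c_0]_{\rho_0,x_0}u = f + F,
\end{equation*}
where $F$ collects all the terms $(a_0-[a_0]_{\rho_0,x_0})u_t$, $x_d^2(a_{ij}-[a_{ij}]_{\rho_0,x_0})D_{ij}u$, etc. Since $u$ and $f$ are supported in $(-\infty,T]\times B_{\rho_0}(x_0)$ with $x_{0d}=1$, so $x_d\approx 1$ there, I may apply Lemma \ref{lem2122217} (with $q=p$ and the given $\omega\in A_p$; the hypothesis $\theta\in(\alpha p,\beta p)$ in case $(ii)$ is exactly what is assumed) to obtain
\begin{equation*}
  \lambda\|u\|_{\bL_{p,\theta,\omega}(T)} + \|u\|_{\frH^2_{p,\theta,\omega}(T)} \leq N\big(\|f\|_{\bL_{p,\theta,\omega}(T)} + \|F\|_{\bL_{p,\theta,\omega}(T)}\big),
\end{equation*}
for $\lambda\geq\lambda_0$ in case $(i)$ and $\lambda_0=0$ in case $(ii)$.

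The core of the argument is then to bound $\|F\|_{\bL_{p,\theta,\omega}(T)}$ by $\varepsilon + N_{\rho_0,\varepsilon}\gamma_0^{(p-p_1)/pp_1}$ times $\lambda\|u\|_{\bL_{p,\theta,\omega}(T)} + \|u\|_{\frH^2_{p,\theta,\omega}(T)}$. Each term of $F$ has the form (oscillation of a coefficient) $\times$ (a top-order quantity like $u_t$, $x_d^2D_x^2u$, $x_dD_xu$, $u$, or $\lambda u$ — all controlled by $\lambda\|u\|_{\bL} + \|u\|_{\frH^2}$). The oscillation factor is small only in an integral sense (the $L_1$-type bound $\gamma_0$ in \eqref{eq2111512}/\eqref{eq2111518}), not pointwise, so I would split via Hölder's inequality with exponents $p_1$ and its conjugate applied on the ball: writing $g := (a_0 - [a_0]_{\rho_0,x_0})$ and $w$ for the relevant derivative of $u$ supported in $B_{\rho_0}(x_0)$, estimate $\|gw\|_{L_{p,\theta}}$ by interpolating $L_p$ between $L_{p_1}$ and $L_\infty$: the coefficient difference is bounded by $2\nu^{-1}$ (or $2K$) and has small average $\gamma_0$, so its $L_r$ norm on $B_{\rho_0}(x_0)$ over any $r\in[1,\infty)$ is at most $N\gamma_0^{1/r}$; choosing the split so that the exponent of $\gamma_0$ becomes $(p-p_1)/pp_1$ and pairing with a higher-integrability bound on $w$ — which in turn is controlled, via a reverse-Hölder / higher-integrability estimate for solutions (or simply the crude bound obtained from Sobolev embedding applied to the localized $W^{1,2}_{p}$ function as in the proof of Lemma \ref{lem2171542}) by the $\frH^2_{p,\theta,\omega}$-norm — gives the $N_{\rho_0,\varepsilon}\gamma_0^{(p-p_1)/pp_1}$ contribution. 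The $\varepsilon$ term arises from the part where the coefficient difference is not small pointwise but the region where $|g|$ is large has small measure: Chebyshev on $\{|g|>\delta\}$ together with the $\gamma_0$ bound lets one trade a factor of the top-order norm against $\varepsilon$ after an interpolation inequality ($L_p\leq \varepsilon$-weighted $\frH^2$ plus $C_\varepsilon$-weighted $\bL$), exactly as \eqref{eq2152354} is used in Lemma \ref{lem2171542}.

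I expect the main obstacle to be the bookkeeping around the mixed dependence of the frozen coefficients: $[a_{dd}]_{\rho_0,x_0}$ and $[a_0]_{\rho_0,x_0}$ may each depend on either $t$ or $x_d$, and after dividing by $a_{dd}$ (as in Lemma \ref{lem2122217}) one must check the ``same single variable'' structure of Assumption \ref{simple1}/\ref{simple2} is preserved — this is why the assumption stipulates that $a_0$, $a_{dd}$, $c_0$ (and $b_d$, $c$) share the same variable. A secondary technical point is that the oscillation control in \eqref{eq2111512} is stated pointwise in $t$ (a spatial average over $B_\rho(x_0)$ for each fixed $t$), so the Hölder split must be carried out slicewise in $t$ and then the $A_p(\bR)$-weighted $L_q$-norm in $t$ reassembled; since $x_d\approx1$ on the support, the weight $x_d^{\theta-1}$ is harmless and the reassembly is routine. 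Finally I would note, as in Lemma \ref{lem2171542}, that $\gamma_0$ and $\lambda_0$ depend only on the stated parameters and not on $\rho_0$ or $\varepsilon$, while the coefficient in front of $\gamma_0^{(p-p_1)/pp_1}$ is allowed to blow up as $\rho_0\to1$ or $\varepsilon\to0$, which is consistent with the statement.
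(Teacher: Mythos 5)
Your overall architecture (freeze the coefficients, treat the difference as a perturbation, use H\"older against the $\gamma_0$-smallness, and an interpolation inequality to generate the $\varepsilon$) matches the paper's, but the central step as you describe it does not go through. You apply the frozen-coefficient estimate in the \emph{same} norm $\bL_{p,\theta,\omega}(T)$ and then try to show $\|F\|_{\bL_{p,\theta,\omega}(T)}\leq(\varepsilon+N\gamma_0^{(p-p_1)/pp_1})(\lambda\|u\|+\|u\|_{\frH^2})$. This cannot work for the top-order terms such as $(a_0-[a_0]_{\rho_0,x_0})u_t$ and $(a_{ij}-[a_{ij}]_{\rho_0,x_0})x_d^2D_{ij}u$: the coefficient difference is small only in an $L_1$-average sense, so to extract smallness in $L_p$ by H\"older you must place $u_t$ (resp.\ $x_d^2D^2u$) in a space of integrability strictly higher than $p$. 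No such higher integrability is available — $u\in\frH^2_{p,\theta,\omega}(T)$ gives $u_t\in\bL_{p,\theta,\omega}$ and nothing more, and there is no reverse-H\"older or $W^{1,2}_{p+\epsilon}$ gain for nondivergence equations with coefficients merely measurable in $(t,x_d)$. Your Chebyshev alternative fails for the same reason: on $\{|a_0-[a_0]_{\rho_0,x_0}|>\delta\}$, a set of small measure, you still only know $\|u_t\mathbf{1}_{\{\cdot\}}\|_{L_p}\leq\|u_t\|_{L_p}$ with no small factor. (If your scheme did work, it would yield the full a priori estimate $\lambda\|u\|+\|u\|_{\frH^2}\leq N\|f\|$ directly after absorption — strictly stronger than what the lemma asserts, which should have been a warning sign.)

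The paper's fix is to run the perturbation argument in the \emph{weaker} mixed norm $\bL_{p,p_1,\theta,\omega}(T)$ with $p_1<p$: it solves $\sL_0v=f$, sets $w=u-v$, and applies the frozen-coefficient estimate \eqref{eq2122338} with $(q,p)$ replaced by $(p,p_1)$. Then H\"older slicewise in $t$ with exponents $p/p_1$ and $q=p/(p-p_1)$ puts the derivative of $u$ in $L_p$ (which is available) and the bounded coefficient difference in $L_{p_1q}$, whose norm on $B_{\rho_0}(x_0)$ is $\leq N_{\rho_0}\gamma_0^{1/(p_1q)}=N_{\rho_0}\gamma_0^{(p-p_1)/pp_1}$. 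This only controls $\|w\|_{\bL_{p,p_1,\theta,\omega}}$, and the Gagliardo--Nirenberg interpolation $\|w\|_{\bL_{p}}\leq N\|w\|^{\kappa}_{\bH^2_{p}}\|w\|^{1-\kappa}_{\bL_{p,p_1}}$ upgrades only the zeroth-order term back to exponent $p$, at the cost of the $\varepsilon\|w\|_{\bH^2_{p,\theta,\omega}}$ term — which is precisely why the conclusion of the lemma estimates only $\|u\|_{\bL_{p,\theta,\omega}}$ against a small multiple of the full norm. You introduced $p_1$ and even named the correct exponent $(p-p_1)/pp_1$, but you deployed $p_1$ on the wrong side of the H\"older split; reversing the roles (lower the integrability of the equation, not raise that of the solution) is the missing idea.
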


\begin{proof}
As in the proof of Lemma \ref{lem2122217}, we prove $(i)$ and $(ii)$ together.

Since Assumption \ref{ass_lead} $(\rho_0,\gamma_0)$ or \ref{ass_coeff} $(\rho_0,\gamma_0)$ holds, we can take the coefficients $[a_{0}]_{\rho_0,x_0}, [a_{ij}]_{\rho_0,x_0}, [b_{i}]_{\rho_0,x_0}, [c]_{\rho_0,x_0}$, and $[c_0]_{\rho_0,x_0}$ satisfying Assumption \ref{simple1} or \ref{simple2}. In particular, when Assumption \ref{ass_lead} $(\rho_0,\gamma_0)$ is satisfied, we put $[b_{i}]_{\rho_0,x_0}=b_i$ and $[c]_{\rho_0,x_0}=c$.
  By Lemma \ref{lem2122217}, we can take $\lambda_0\geq0$ such that for any  $\lambda\geq\lambda_0$, there is a solution $v\in \frH_{p,\theta,\omega}^2(T)$ to
  \begin{equation*}
    \sL_0v=f,
  \end{equation*}
 where
  \begin{equation*}
    \sL_0 v := [a_{0}]_{\rho_0,x_0} v_t - x_d^2 [a_{ij}]_{\rho_0,x_0}D_{ij}v + x_d [b_{i}]_{\rho_0,x_0}D_iv + [c]_{\rho_0,x_0}v +\lambda [c_{0}]_{\rho_0,x_0} v.
  \end{equation*}
  Due to \eqref{eq2122338}, we also have
  \begin{equation} \label{eq2131649}
    \|v\|_{\frH_{p,\theta,\omega}^2(T)} \leq N  \|f\|_{\bL_{p,\theta,\omega}(T)}.
  \end{equation}
   Note that $f\in \bL_{p,p_1,\theta,\omega}(T)$ since $f$ is compactly supported on $(-\infty,T]\times B_{\rho_0}(x_{0})$. Thus, as described in the proof of Lemma \ref{lem2122217}, $v$ is also in the space $\frH_{p,p_1,\theta,\omega}^2(T)$.

     Now we consider $w:=u-v$, which satisfies
  \begin{align*}
    \sL_0w &= ([a_{0}]_{\rho_0,x_0}-a_0) u_t + x_d^2 (a_{ij}-[a_{ij}]_{\rho_0,x_0})D_{ij}u - x_d (b_i - [b_{i}]_{\rho_0,x_0}) D_iu 
    \\
    &\quad - (c-[c]_{\rho_0,x_0})u -\lambda (c_0-[c_0]_{\rho_0,x_0})u.
  \end{align*}
By \eqref{eq2122338} with $(p,p_1)$ instead of $(q,p)$, we obtain
\begin{align} \label{eq2131728}
    \|w\|_{\bL_{p,p_1,\theta,\omega}(T)} &\leq N \|\sL_0w\|_{\bL_{p,p_1,\theta,\omega}(T)} \nonumber
    \\
    &\leq N\|([a_{0}]_{\rho_0,x_0}-a_0) u_t\|_{\bL_{p,p_1,\theta,\omega}(T)} \nonumber
    \\
    &\quad + N \|(a_{ij}-[a_{ij}]_{\rho_0,x_0})x_dD_{x}u\|_{\bL_{p,p_1,\theta,\omega}(T)} \nonumber
  \\
  &\quad + N \|(b_{i}-[b_{i}]_{\rho_0,x_0})x_dD_{x}u\|_{\bL_{p,p_1,\theta,\omega}(T)} \nonumber
  \\
  &\quad + N \|(c-[c]_{\rho_0,x_0}) u\|_{\bL_{p,p_1,\theta,\omega}(T)} \nonumber
  \\
  &\quad + N\lambda \|(c_0-[c_0]_{\rho_0,x_0}) u\|_{\bL_{p,p_1,\theta,\omega}(T)}.
\end{align}
Since $supp \, u(t,\cdot)\subset B_{\rho_0}(x_{0d})$, by H\"older's inequality and Assumption \ref{ass_lead} $(\rho_0,\gamma_0)$ or Assumption \ref{ass_coeff} $(\rho_0,\gamma_0)$,
\begin{align*}
  &\|([a_{0}]_{\rho_0,x_0}-a_0) u_t\|_{\bL_{p,p_1,\theta,\omega}(T)} \nonumber
  \\
  &\leq \left(\sup_{t\leq T}\int_{B_{\rho_0}(x_0)} |[a_{0}]_{\rho_0,x_0}-a_0|^{p_1q} x_d^{\theta-1} dx\right)^{1/p_1q} \|u_t\|_{\bL_{p,\theta,\omega}(T)} \nonumber
  \\
  &\leq N_{\rho_0} \left( \sup_{t\leq T} \aint_{B_{\rho_0}(x_0)} |[a_{0}]_{\rho_0,x_0}-a_0| dx \right)^{1/p_1q} \|u_t\|_{\bL_{p,\theta,\omega}(T)} \nonumber
  \\
  &\leq N_{\rho_0} \gamma_0^{1/p_1q}\|u_t\|_{\bL_{p,\theta,\omega}(T)},
\end{align*}
where $q:=p/(p-p_1)$ is the H\"older conjugate of $p/p_1$. Similarly, we can bound the last four terms in \eqref{eq2131728} (recall that $[b_{i}]_{\rho_0,x_0}=b_i$ and $[c]_{\rho_0,x_0}=c$ if Assumption \ref{ass_lead} $(\rho_0,\gamma_0)$ holds) to obtain that
\begin{align} \label{eq2141333}
  \|w\|_{\bL_{p,p_1,\theta,\omega}(T)} \leq N_{\rho_0} \gamma_0^{1/p_1q}(\lambda\|u\|_{\bL_{p,\theta,\omega}(T)} + \|u\|_{\frH^2_{p,\theta,\omega}(T)}).
\end{align}
Next, we estimate $\bL_{p,\theta,\omega}(T)$-norm of $w$. By the (unweighted) Gagliardo-Nirenberg interpolation inequality in $x$-variable,
\begin{align*}
  \|h\|_{L_{p}(\bR^d)} &\leq N\|D_x^2h\|_{L_{p}(\bR^d)}^\kappa \|h\|_{L_{p_1}(\bR^d)}^{1-\kappa},
\end{align*}
where 
\begin{equation*}
  \frac{1}{p}=\kappa\left(\frac{1}{p}-\frac{2}{d}\right)+(1-\kappa)\frac{1}{p_1}, \quad \kappa\in(0,1).
\end{equation*}
From this and \eqref{equivnorm}, for $\zeta \in C_c^\infty(\bR_+)$ satisfying \eqref{eq2102340},
\begin{align*}
  &\|w\|_{\bL_{p,\theta,\omega}(T)} \nonumber
  \\
  &= \left( \int_{-\infty}^T \sum_{m=-\infty}^\infty e^{m(\theta+d-1)}\|w(t,e^m\cdot)\zeta\|_{L_p(\bR^d)}^p \omega(t) dt \right)^{1/p} \nonumber
  \\
  &\leq N \left( \int_{-\infty}^T \sum_{m=-\infty}^\infty e^{m(\theta+d-1)}\|w(t,e^m\cdot)\zeta\|_{W_p^2(\bR^d)}^{\kappa p} \|w(t,e^m\cdot)\zeta\|_{L_{p_1}(\bR^d)}^{(1-\kappa) p} \omega(t) dt \right)^{1/p} \nonumber
  \\
  &\leq N\|w\|_{\bH^2_{p,\theta,\omega}(T)}^{\kappa}\|w\|_{\bL_{p,p_1,\theta,\omega}(T)}^{1-\kappa}.
\end{align*}
Hence, by \eqref{eq2141333}, for any $\varepsilon\in(0,1)$,
\begin{align*}
  \|w\|_{\bL_{p,\theta,\omega}(T)} &\leq  \varepsilon \|w\|_{\bH_{p,\theta,\omega}^2(T)} + N_{\varepsilon} \|w\|_{\bL_{p,p_1,\theta,\omega}(T)} \nonumber
  \\
  &\leq  \varepsilon \|w\|_{\bH_{p,\theta,\omega}^2(T)} + N_{\rho_0, \varepsilon} \gamma_0^{1/p_1q}(\lambda\|u\|_{\bL_{p,\theta,\omega}(T)} + \|u\|_{\frH^2_{p,\theta,\omega}(T)}) \nonumber
  \\
  &\leq (\varepsilon + N_{\rho_0,\varepsilon} \gamma_0^{1/p_1q})(\lambda \|u\|_{\bL_{p,\theta,\omega}(T)} + \|u\|_{\frH^2_{p,\theta,\omega}(T)}) + \varepsilon \|v\|_{\frH_{p,\theta,\omega}^2(T)}.
\end{align*}
Since $u=v+w$, this and \eqref{eq2131649} lead to the desired estimate \eqref{eq2131519}.
The lemma is proved.
\end{proof}

Now we prove our first main result.

\begin{proof}[Proof of Theorem \ref{thmpara}]
Due to Lemma \ref{lem2122217}, the method of continuity, and the extrapolation theorem, it suffices to prove \eqref{eq2111600} with $p=q$. Thanks to the denseness of $\frH_{q,p,\theta,\omega}^2(T)$, we prove the estimate when $u\in C_c^\infty((-\infty,T]\times\bR^d_+)$.
Let $\lambda_0$ taken from Lemma \ref{lem2131452} and denote
\begin{equation*}
  f:= \sL_p u + \lambda c_0u.
\end{equation*}
  Let $\varepsilon_0\in(0,1)$. We consider a partition of unity with weight. By \cite[Lemma 5.6]{KL13}, there exists $\rho_0=\rho_0(\varepsilon_0)\in(1/2,1)$, and nonnegative $\eta_k\in C_c^\infty(\bR^{d}_+)$ such that
  \begin{eqnarray} \label{eq2141547}
    \sum_{k=1}^\infty \eta_k^p\geq1, \quad \sum_{k=1}^\infty \eta_k \leq N(d), \quad \sum_{k=1}^\infty\left( x_d|D_x\eta_k|+ x_d^2|D_x^2\eta_k| \right) \leq \varepsilon_0^p,
  \end{eqnarray}
  Moreover, for each $k$, $supp\,\eta_k \subset B_{\rho_0 x_{kd}}(x_k)$ for some $x_k\in \bR^{d}_+$.

 Then $u_k:=u\eta_k$ satisfies
   \begin{align*}
     \sL_p u_k + \lambda c_0 u_k = f\eta_k-x_d^2(a_{ij}+a_{ji})D_i u D_j \eta_k -x_d^2 a_{ij} u D_{ij} \eta_k + x_d b_i u_k D_i\eta_k.
   \end{align*}
   Now we apply Lemma \ref{lem2131452} to $v(t,x)=u_k(t,x_{kd}x)$. Take $p_1\in(1,p)$ such that $\alpha p_1<\theta<\beta p_1$. Then for any $\varepsilon\in(0,1)$,
\begin{align*}
  & \|u_k\|_{\bL_{p,\theta,\omega}(T)} 
  \\
  &\leq (\varepsilon + N_{\rho_0,\varepsilon} \gamma_0^{(p-p_1)/pp_1})(\lambda \|u_k\|_{\bL_{p,\theta,\omega}(T)} + \|u_k\|_{\frH_{p,\theta,\omega}^2(T)}) + N \|f\eta_k\|_{\bL_{p,\theta,\omega}(T)}
  \\
  &\quad+ N \|x_d^2 D_xu D_x\eta_k\|_{\bL_{p,\theta,\omega}(T)} + N \|x_d^2 u D_x^2 \eta_k\|_{\bL_{p,\theta,\omega}(T)} + N \| x_d u_k D_i\eta_k \|_{\bL_{p,\theta,\omega}(T)}.
\end{align*}
By raising both sides of this inequality to the power of $p$, and summing in $k$, \eqref{eq2141547} yields that
\begin{align*}
  \|u\|_{\bL_{p,\theta,\omega}(T)} &\leq (N\varepsilon + N_{\rho_0,\varepsilon} \gamma_0^{(p-p_1)/pp_1}) (\lambda\|u\|_{\bL_{p,\theta,\omega}(T)} + \|u\|_{\frH_{p,\theta,\omega}^2(T)})
  \\
  &\quad + N \|f\|_{\bL_{p,\theta,\omega}(T)} + N\varepsilon_0\left( \| u\|_{\bL_{p,\theta,\omega}(T)}+\| x_d D_x u\|_{\bL_{p,\theta,\omega}(T)} \right).
\end{align*}
This and higher-order estimate \eqref{eq2121719} yield
\begin{align*}
  &\|u_t\|_{\bL_{p,\theta,\omega}(T)} + (1+\lambda)\|u\|_{\bL_{p,\theta,\omega}(T)} + (1+\sqrt{\lambda})\|x_dD_xu\|_{\bL_{p,\theta,\omega}(T)} + \|x_d^2D_x^2u\|_{\bL_{p,\theta,\omega}(T)} 
  \\
  &\leq  (N\varepsilon + N_{\rho_0,\varepsilon} \gamma_0^{(p-p_1)/pp_1}) (\lambda\|u\|_{\bL_{p,\theta,\omega}(T)} + \|u\|_{\frH_{p,\theta,\omega}^2(T)})
  \\
  &\quad + N\|f\|_{\bL_{p,\theta,\omega}(T)} + N\varepsilon_0\left( \| u\|_{\bL_{p,\theta,\omega}(T)} + \| x_d D_x u\|_{\bL_{p,\theta,\omega}(T)} \right).
\end{align*}
We choose $\varepsilon_0$ sufficiently small so that $N\varepsilon_0 <1/3$. Then $\rho_0=\rho_0(\varepsilon_0)$ is determined by the choice of $\eta_k$. Next, we take a small $\varepsilon$, and choose sufficiently small $\gamma_0$ so that $N\varepsilon + N_{\varepsilon} \gamma_0^{(p-p_1)/pp_1}<1/3$. Thus, we obtain the estimate \eqref{eq2111600} when $q=p$. The theorem is proved.
\end{proof}

To prove Theorem \ref{thmfinite}, it suffices to repeat the proof of \cite[Theorem 2.9]{DR24div}, where we adopt the idea of the proof of \cite[Theorem 2.1]{KryVMO}. For the reader's convenience, we provide the full proof.

\begin{proof}[Proof of Theorem \ref{thmfinite}]

We first prove the a priori estimate \eqref{eq2121343} as well as the uniqueness. 
It is enough to consider $u\in C_c^\infty([0,T]\times\bR^d)$ with $u(0,\cdot)=0$ due to the definition of $\mathring{\frH}_{q,p,\theta,\omega}^2(0,T)$. Let us extend $u$ and $f$ to be zero for $t\leq0$. Then by zero initial condition, $u\in \frH_{q,p,\theta,\omega}^2(T)$ and it satisfies \eqref{mainfinite} in $(-\infty,T)\times\bR^d_+$.
For any $\bar{\lambda}\geq0$, we set $v:=e^{-\bar{\lambda}t}u \in \frH_{q,p,\theta,\omega}^2(T)$, which satisfies 
  \begin{equation*}
   \sL_p v +(\lambda c_0+\bar{\lambda}a_0)v = e^{-\bar{\lambda}t}f \text{ in } \Omega_T.
 \end{equation*}
Here, one can check that $(a_{ij})$, and $\frac{\lambda c_0+\bar{\lambda}a_0}{\lambda+\bar{\lambda}}$ with $[\frac{\lambda c_0+\bar{\lambda}a_0}{\lambda+\bar{\lambda}}]_{\rho,x_0}:=\frac{\lambda [c_0]_{\rho,x_0}+\bar{\lambda}[a_0]_{\rho,x_0}}{\lambda+\bar{\lambda}}$ satisfy Assumption \ref{ass_lead} $(\rho_0,\gamma_0)$. Thus, by applying Theorem \ref{thmpara} $(i)$, we can find $\rho_0\in(1/2,1)$ sufficiently close to $1$, a sufficiently small number $\gamma_0>0$,
 and a sufficiently large number $\lambda_0\geq0$ such that for any $\bar{\lambda}\geq\lambda_0$, we have
\begin{equation*}
\begin{aligned}
	  &\|v_t\|_{\bL_{q,p,\theta,\omega}(T)} + (1+\lambda+\bar{\lambda})\|v\|_{\bL_{q,p,\theta,\omega}(T)}\\
	&\quad + (1+\sqrt{\lambda+\bar{\lambda}}) \|x_dD_xv\|_{\bL_{q,p,\theta,\omega}(T)} + \|x_d^2D_x^2v\|_{\bL_{q,p,\theta,\omega}(T)}\\
	&\leq N \|e^{-\bar{\lambda}t}f\|_{\bL_{q,p,\theta,\omega}(T)},
\end{aligned}
\end{equation*}
 where $N$ is independent of $T$.
By taking $\bar{\lambda}=\lambda_0$,
 \begin{align*}
 &\|u_t\|_{\bL_{q,p,\theta,\omega}(0,T)} + (1+\lambda)\|u\|_{\bL_{q,p,\theta,\omega}(0,T)} 
 \\
 &\quad+ (1+\sqrt{\lambda}) \|x_dD_xu\|_{\bL_{q,p,\theta,\omega}(0,T)} + \|x_d^2D_x^2u\|_{\bL_{q,p,\theta,\omega}(0,T)} \nonumber
     \\
 &\leq N(T) \Big(\|v_t\|_{\bL_{q,p,\theta,\omega}(T)} + (1+\lambda+\lambda_0)\|v\|_{\bL_{q,p,\theta,\omega}(T)} 
 \\
 &\quad+ (1+\sqrt{\lambda+\lambda_0}) \|x_dD_xv\|_{\bL_{q,p,\theta,\omega}(T)} + \|x_d^2D_x^2v\|_{\bL_{q,p,\theta,\omega}(T)} \Big) \nonumber
  \\
  &\leq N(T) \|e^{-\bar{\lambda}t}f\|_{\bL_{q,p,\theta,\omega}(T)} \leq N(T) \|f\|_{\bL_{q,p,\theta,\omega}(T)}.
 \end{align*}
 Hence, \eqref{eq2121343} is proved.

Finally, we prove the existence result.
Note that the constant $N$ in the a priori estimate \eqref{eq2121343} is independent of $\lambda$. Thus, due to the method of continuity, it suffices to prove the existence for a single $\lambda\geq0$. Moreover, the estimate also allows us to consider the case $f\in C_c^\infty((0,T)\times\bR^d_+)$.
 We extend $f$ by zero for $t\leq0$. By Theorem \ref{thmpara} $(i)$, there are constants $\rho_0\in(1/2,1)$, $\gamma_0>0$, and $\lambda_0\geq0$ such that under Assumption \ref{ass_lead} $(\rho_0,\gamma_0)$, for any $\lambda\geq\lambda_0$, there is a solution $u\in \frH_{q,p,\theta,\omega}^2(T)$ to
\begin{equation*}
  \sL_p u+\lambda c_0 u = f \text{ in } \Omega_T.
\end{equation*}
 Let us fix such a $\lambda\geq\lambda_0$ and the corresponding solution $u$.
Since $f\in C_c^\infty((0,T)\times\bR^d_+)$, we take $\delta>0$ so that $f(t,\cdot)=0$ if $t<\delta$.
Thus, \eqref{eq2111600} with $T=\delta$ yields that $u(t,\cdot)=0$ for $t<\delta$. 
Hence, one can find a sequence of functions $u_n\in C_c^\infty((-\infty,T]\times\bR^d_+)$ such that $u_n(0,\cdot)=0$, and $u_n\to u$ in $\frH_{q,p,\theta,\omega}^2(T)$, which means that $u\in \mathring{\frH}_{q,p,\theta,\omega}^2(T)$ is a solution to \eqref{mainfinite} in $(0,T)\times\bR^d_+$. The theorem is proved.
\end{proof}

\section{Elliptic equations} \label{sec_ell}

In this section, we focus on the elliptic equations.
Our first lemma in this section is a version of Lemma \ref{lem2171542}, which is about higher-order regularity of solution.

\begin{lemma} \label{lem8150107}
Let $\rho_0\in(1/2,1)$, $\lambda\geq0$, $p\in(1,\infty)$, and $\theta\in \bR$. Suppose that \eqref{ellip}-\eqref{rembound} are satisfied.
Assume that $u\in H_{p,\theta}^2$ satisfies
\begin{equation*}
- x_d^2 a_{ij} D_{ij}u + x_d b_i D_i u + cu + \lambda c_0 u = f
\end{equation*}
in $\bR^d_+$.
Then there exists sufficiently small
\begin{equation*}
    \gamma_0=\gamma_0(d,p,\nu)>0
\end{equation*}
 such that if Assumption \ref{ass_lead} $(\rho_0,\gamma_0)$ is satisfied, then we have
\begin{eqnarray} \label{eq2171556}
  \|u\|_{L_{p,\theta}} + \|x D_x u\|_{L_{p,\theta}} + \|x^2 D_x^2 u\|_{L_{p,\theta}} \leq N \left( \|f\|_{L_{p,\theta}} + \|u\|_{L_{p,\theta}} \right),
\end{eqnarray}
where $N=N(d,p,\theta,K,\nu)$.
\end{lemma}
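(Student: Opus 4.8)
The plan is to adapt the proof of Lemma \ref{lem2171542} to the time-independent setting: reduce the weighted estimate \eqref{eq2171556} to a family of \emph{unweighted} $W_p^2$-estimates on $\bR^d$ obtained by scaling and localizing near a fixed strip $\{x_d\in(2,3)\}$, then integrate over the scaling parameter against the weight $x_d^{\theta-1}$.

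First I would localize. For a function $h$ on $\bR^d_+$ write $h_r(x):=h(x/r)$, fix a nonnegative cutoff $\zeta\in C_c^\infty((2,3))$, and set $v(x):=\zeta(x_d)u_r(x)$. Since $u\in H_{p,\theta}^2$, one checks $v\in W_p^2(\bR^d)$, and a direct computation shows that for any $\Lambda\ge0$,
\begin{equation*}
  -x_d^2 a_{ij,r}D_{ij}v+(\Lambda+\lambda)c_{0,r}\,v=g \quad\text{in }\bR^d,
\end{equation*}
where $g$ collects $\zeta f_r$, the commutator terms $-x_d^2 a_{dd,r}\zeta''u_r-x_d^2\sum_{i\ne d}(a_{id,r}+a_{di,r})\zeta'D_iu_r$, the lower-order contributions $-x_db_{i,r}\zeta D_iu_r-c_r\zeta u_r$, and $\Lambda c_{0,r}\zeta u_r$, i.e. \eqref{eq8142335} with the $u_t$-term removed. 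Because $\operatorname{supp}v\subset\{2<x_d<3\}$, I would (harmlessly) redefine the leading coefficients outside $\{1<x_d<4\}$ to equal the identity matrix smoothly in $x_d$; this does not change the equation for $v$, while on $\{1<x_d<4\}$ the matrix $x_d^2 a_{ij,r}$ is genuinely uniformly elliptic and bounded. Here $a_{dd}$ is measurable in $x_d$ only, so $x_d^2a_{dd,r}$ depends only on $x_d$, and by \eqref{eq2111512} the $x'$-oscillation of the other entries over balls of radius $\le R_0$ is $\le N\gamma_0$, uniformly in $r$; hence the (modified) leading coefficients lie in the partial-VMO class of the unweighted theory.

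Next I would invoke the elliptic $W_p^2$-estimate for nondivergence form equations with coefficients measurable in one direction and of small mean oscillation in the others (the elliptic counterpart of \cite[Theorem 6.3]{DK18}): there is $\Lambda_0=\Lambda_0(d,p,\nu)\ge0$ so that, with $\Lambda=\Lambda_0$ and $\gamma_0$ small,
\begin{equation*}
  \|D_x^2 v\|_{L_p(\bR^d)}+(\Lambda_0+\lambda)\|v\|_{L_p(\bR^d)}+\sqrt{\Lambda_0+\lambda}\,\|D_xv\|_{L_p(\bR^d)}\le N\|g\|_{L_p(\bR^d)}.
\end{equation*}
Then, exactly as in the passage from \eqref{eq2121810} to \eqref{eq2122053}, I raise both sides to the $p$-th power, multiply by $r^{-\theta-d}$, and integrate in $r$ over $(0,\infty)$, using $\int_0^\infty\big(\int_{\bR^d_+}|D_d^j\zeta(x_d)h_r|^p dx\big)r^{-\theta-d}\,dr=N_j\int_{\bR^d_+}|h|^p x_d^{\theta-1}\,dx$ with $N_j=\int_0^\infty|D_s^j\zeta(s)|^p s^{-\theta}\,ds<\infty$ (finite for every $\theta\in\bR$ since $\zeta$ is supported away from $0$ and $\infty$ — this is what makes the argument work for all the weights). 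Bounding the $g$-contribution via $|b_i|,|c|\le K$ and \eqref{rembound}, this gives
\begin{equation*}
  \|u\|_{L_{p,\theta}}+\|x_dD_xu\|_{L_{p,\theta}}+\|x_d^2D_x^2u\|_{L_{p,\theta}}\le N\big(\|f\|_{L_{p,\theta}}+\|u\|_{L_{p,\theta}}+\|x_dD_xu\|_{L_{p,\theta}}\big).
\end{equation*}
Finally the first-order term on the right is absorbed by the interpolation inequality \eqref{eq2152354}, $\|x_dD_xu\|_{L_{p,\theta}}\le N\|u\|_{H_{p,\theta}^2}^{1/2}\|u\|_{L_{p,\theta}}^{1/2}$, followed by Young's inequality, yielding \eqref{eq2171556}.

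The main obstacle is step two: checking that the scaled-and-localized leading coefficients $x_d^2 a_{ij,r}$ (after the modification outside $\{1<x_d<4\}$) belong to the partial-VMO class of the unweighted estimate with constants uniform in $r$. This relies on $a_{dd}$ depending only on its own variable (so the $x_d^2$ factor creates no $x'$-oscillation in the $dd$-entry) together with the smoothness and non-degeneracy of $x_d^2$ on $\{1<x_d<4\}$ and a change-of-variables bookkeeping that transfers \eqref{eq2111512} to all small scales — precisely the place where the nondivergence structure (keeping $a_{dd}$ merely measurable in one direction) is used. The commutator algebra producing $g$, the $r$-integration, and the interpolation are routine and parallel Lemma \ref{lem2171542}.
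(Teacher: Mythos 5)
Your proposal is correct and follows essentially the same route as the paper: localize $v=\zeta(x_d)u_r$ on the strip $\{2<x_d<3\}$, apply the unweighted elliptic $W_p^2$-estimate for nondivergence equations with hierarchically partially BMO/VMO coefficients (the paper cites \cite[Theorem 2.5]{D12}, which is exactly the elliptic counterpart of \cite[Theorem 6.3]{DK18} you invoke), integrate in $r$ against $r^{-\theta-d}$, and absorb the first-order term via \eqref{eq2152354} and Young's inequality. The coefficient-verification step you flag as the ``main obstacle'' is handled in the paper exactly as you describe and presents no difficulty.
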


\begin{proof}
    
 Let $h_r(x):=h(x/r)$ for any function $h$ on $\bR^d_+$ and $v(x):=\zeta(x_d) u_r(x) := \zeta(x_d) u(x/r)$ where $\zeta\in C_c^\infty((2,3))$ is a standard nonnegative cutoff function. Then $v \in W_{p}^2(\bR^d)$ satisfies
    \begin{equation} \label{eq8151424}
    -x_d^2 a_{ij,r}D_{ij}v+ (\Lambda +\lambda) c_{0,r} v = g,
  \end{equation}
   where $\Lambda\geq0$ and
      \begin{equation} \label{eq2171558}
    g:=\zeta f_r - x_d^2 a_{dd,r}\zeta''u_r-x_d^2\sum_{i\neq d} (a_{id,r}+a_{di,r})\zeta' D_iu_r -x_d b_{i,r} \zeta D_iu_r + \Lambda c_{0,r}\zeta u_r - c_r\zeta u_r.
  \end{equation}
Now we apply \cite[Theorem 2.5]{D12} to obtain that
\begin{eqnarray} \label{eq2171550}
    (1+\lambda)\|v\|_{L_{p}(\bR^d)}+ (1+\sqrt{\lambda})\|D_x v\|_{L_{p}(\bR^d)} + \| D_x^2 v\|_{L_{p}(\bR^d)} \leq N \|g\|_{L_{p}(\bR^d)}.
\end{eqnarray}
Similar to \eqref{eq2122053}, we raise both sides of \eqref{eq2171550} to the power of $p$, multiply by $r^{-\theta-d}$, and integrate with respect to $r$. Then
\begin{align*}
  &(1+\lambda)\|u\|_{L_{p,\theta}} + (1+\sqrt{\lambda})\|x_dD_xu\|_{L_{p,\theta}} + \|x_d^2D_x^2u\|_{L_{p,\theta}}
  \\
  &\leq N \left(\|f\|_{L_{p,\theta}} + \|u\|_{L_{p,\theta}} + \|x_dD_xu\|_{L_{p,\theta}}\right).
\end{align*}
It remains to apply \eqref{eq2152354} and obtain $u\in H_{p,\theta}^2$. The lemma is proved.
\end{proof}

For the proof of Theorem \ref{thmell} $(iii)$, we handle the case when $d=1$. In this case, we denote the coefficients to be $a$ and $b$, instead of $(a_{ij})$ and $(b_i)$, respectively.

\begin{lemma} \label{lem2152133}
Let $p\in(1,\infty)$. Suppose that $a$ is a measurable function satisfying \eqref{ellip}, and $a, b$, and $c$ satisfy the ratio condition
      \begin{equation} \label{eq2181539}
    \frac{b}{a}=n_b, \quad \frac{c}{a}=n_c
  \end{equation}
  for some $n_b, n_c\in \bR$.
  Assume that the quadratic equation \eqref{eq2152256} has two distinct real roots $\alpha <\beta$.
Then for any $\theta\in \bR\setminus [\alpha p, \beta p]$ and $f \in L_{p,\theta}$, there is a unique solution $u\in H_{p,\theta}^2$ to
 \begin{equation} \label{eq2152047}
-x^2 aD_{x}^2u + x bD_xu  +cu = f.
\end{equation}
Moreover, for this solution, we have
\begin{eqnarray} \label{eq2152046}
  \|u\|_{L_{p,\theta}} + \|x D_x u\|_{L_{p,\theta}} + \|x^2 D_x^2 u\|_{L_{p,\theta}} \leq N \|f\|_{L_{p,\theta}},
\end{eqnarray}
where $N=N(p,\theta,n_b,n_c,\nu)$.
\end{lemma}

\begin{proof}
By dividing \eqref{eq2152047} by $a$, one may assume that $a, b$, and $c$ are constants.

 Due to the denseness of $C_c^\infty(\bR_+)$ in $L_{p,\theta}$,
we just need to show that for $f \in C_c^\infty(\bR_+)$, one can find a solution $u$ satisfying \eqref{eq2152046}.
  Since \eqref{eq2152047} is an ordinary differential equation, the general solution is given by
\begin{equation} \label{eq2152056}
  u(x)=(A_1(x)+B_1)x^{-\alpha} + (A_2(x)+B_2)x^{-\beta},
\end{equation}
where $B_1$ and $B_2$ are arbitrary constants,
\begin{align*}
  A_1(x) &:= -\frac{1}{a(\beta-\alpha)} \int_0^x y^{\alpha-1} f(y) \, dy,
\end{align*}
and
\begin{align*}
  A_2(x) &:= \frac{1}{a(\beta-\alpha)} \int_0^x y^{\beta-1} f(y) \, dy
\end{align*}
(see e.g. \cite[Theorem 3.6.1]{BDM21}). 

When $\theta<\alpha p$, if we choose $B_1=B_2=0$ in \eqref{eq2152056}, then  Hardy’s inequality yields that (see e.g. Theorem 5.1 in the preface of \cite{K85}) 
\begin{align*}
  \|u\|_{L_{p,\theta}} \leq \|A_1\|_{L_{p,\theta-\alpha p}} + \|A_2\|_{L_{p,\theta-\beta p}} \leq N\|f\|_{L_{p,\theta}}.
\end{align*}
Since $u$ defined by \eqref{eq2152056} is in the space $H_{p,\theta}^2$, one can apply \eqref{eq2171556} to obtain \eqref{eq2152046}.
For the case $\theta>\beta p$, one can still obtain the desired result by repeating the above argument with
\begin{align*}
  B_1 = \frac{1}{a(\beta-\alpha)} \int_0^\infty f(y) y^{\alpha-1} dy,
\end{align*}
and
\begin{align*}
  B_2 = \frac{1}{a(\beta-\alpha)} \int_0^\infty f(y) y^{\beta-1}dy.
\end{align*}

Next we prove the uniqueness of solution. Suppose that $u\in H_{p,\theta}^2$ is a solution to \eqref{eq2152047} with $f=0$. 
Let $v:=x^\alpha u$. Then by \eqref{eq2152256}, $v$ satisfies
\begin{align} \label{eq2152249}
    &-x^2aD_x^2v + x (2a\alpha+b) D_xv + (-a\alpha^2 -(a+b)\alpha +c)v \nonumber
    \\
    &= -x^2aD_x^2v + x (2a\alpha+b) D_xv = x^\alpha f.
\end{align}
This implies that we may assume that $c=0$.
In this case,
\begin{equation*}
  ax^2D_x^2u - bxD_xu = 0.
\end{equation*}
Then, by the standard formula for homogeneous Euler equation (ordinary differential equation),
\begin{equation*}
  u(x)=C_1x^{-\alpha} + C_2x^{-\beta}
\end{equation*}
for some constants $C_1,C_2\in\bR$. Here, we need to choose $C_1=C_2=0$ since $u\in H_{p,\theta}^2$. Thus, we have $u=0$.
The lemma is proved.
\end{proof}

\vspace{1em}

\begin{proof}[Proof of Theorem \ref{thmell}]

We first consider the case $(iii)$. Note that the case when $a, b$, and $c$ satisfy the ratio condition \eqref{eq2181539} is handled in Lemma \ref{lem2152133}. For general coefficients, we need to repeat the proofs of Lemma \ref{lem2131452} and Theorem \ref{thmpara}, using Lemma \ref{lem2152133} and \eqref{eq2171556} instead of Lemma \ref{lem2122217} and \eqref{eq2121719}, respectively. We omit the details.

We present the proof of $(i)$-$(ii)$ together. First, we deal with the a priori estimate \eqref{eq216207}. Let $v(t,x):=\eta_n(t)u(x):=\eta(t/n)u(x)$ where $\eta \in C_c^\infty(\bR)$ and $u\in C_c^\infty(\bR^d_+)$. Then $v$ satisfies the parabolic equation
    \begin{equation*}
      \sL_p v+\lambda c_0 v = \eta_n f + \eta_n' u
    \end{equation*}
    in $\bR\times \bR^d_+$.
    Note that for $g\in L_{p,\theta}$,
\begin{align*}
  \|\eta_n g\|_{\bL_{p,\theta}(\infty)}^p=nN_1\|g\|_{L_{p,\theta}}^p, \quad  \|\eta_n' g\|_{\bL_{p,\theta}(\infty)}^p=n^{1-p}N_2\|g\|_{L_{p,\theta}}^p,
\end{align*}
where
\begin{equation*}
  N_1:=\int_{-\infty}^\infty |\eta|^p \,dt, \quad N_2:=\int_{-\infty}^\infty |\eta'|^p \,dt.
\end{equation*}
Let $\lambda_0\geq0$ be taken from Theorem \ref{thmpara}. Then by \eqref{eq2111600} with the case $p=q$ and $\omega=1$, for $\lambda\geq\lambda_0$,
\begin{eqnarray*}
  &(1+\lambda)\|u\|_{L_{p,\theta}} + (1+\sqrt{\lambda})\|x_dD_xu\|_{L_{p,\theta}} + \|x_d^2D_x^2u\|_{L_{p,\theta}} \nonumber
  \\
  &\leq N \left( \|f\|_{L_{p,\theta}} + n^{-1}\|u\|_{L_{p,\theta}} \right).
\end{eqnarray*}
Thus, one can easily obtain \eqref{eq216207} by letting $n\to\infty$.

Lastly, we prove the existence. 
Thanks to the method of continuity, we may assume that the coefficients are constants. In addition, as in the proof of Theorem \ref{thmfinite}, we just need to prove the result for a single $\lambda\geq0$.

Note that the equation can be rewritten into a divergence form equation
   \begin{equation} \label{eq2152201}
    -x_d^2 D_{i}(\bar{a}_{ij} D_j u) + x_d b_i D_i u +cu +\lambda c_0 u= f,
   \end{equation}
   where
   \begin{equation*}
     \bar{a}_{ij}:=\left\{\begin{array}{ll}
       a_{ij}+a_{ji} &\text{ for } i\neq d \text{ and } j=d;
       \\
       0 &\text{ for } i= d \text{ and } j\neq d;
       \\
       a_{ij} &\text{ otherwise}.
     \end{array} \right.
   \end{equation*}
   By \cite[Theorem 2.14]{DR24div}, there is $\lambda_0\geq0$ such that for $\lambda\geq\lambda_0$, one can find a solution $u\in H_{p,\theta}^1$ to \eqref{eq2152201}. Moreover, for this solution, we have
   \begin{align} \label{eq2152239}
       (1+\lambda)\|u\|_{L_{p,\theta}} + (1+\sqrt{\lambda})\|x_dD_xu\|_{L_{p,\theta}} \leq N \|f\|_{L_{p,\theta}}.
   \end{align}
   
 Let $h_r(x):=h(x/r)$ for any function $h$ on $\bR^d_+$ and $v(x):=\zeta(x_d) u_r(x) := \zeta(x_d) u(x/r)$ where $\zeta\in C_c^\infty((2,3))$ is a standard nonnegative cutoff function. Then $v \in W_{p}^1(\bR^d)$ satisfies
    \begin{equation} \label{eq2172233}
    -x_d^2 a_{ij,r}D_{ij}v+ (\Lambda +\lambda) c_{0,r} v = g,
  \end{equation}
   where $\Lambda\geq0$ and $g$ is given by \eqref{eq2171558}.
Since $g\in L_{p}(\bR^d)$, one can apply \cite[Theorem 2.5]{D12} to obtain a solution $\tilde{v}$ to \eqref{eq2172233} in $W_{p}^{2}(\bR^d)$. Since \eqref{eq2172233} can be understood in divergence form, the uniqueness result in $W_{p}^1(\bR^d)$ leads to $v=\tilde{v}\in W_{p}^{2}(\bR^d)$, and
\begin{eqnarray} \label{eq2152242}
    (1+\lambda)\|v\|_{L_{p}(\bR^d)}+ (1+\sqrt{\lambda})\|D_x v\|_{L_{p}(\bR^d)} + \| D_x^2 v\|_{L_{p}(\bR^d)} \leq N \|g\|_{L_{p}(\bR^d)}.
\end{eqnarray}
Similar to \eqref{eq2122053}, we raise both sides of \eqref{eq2152242} to the power of $p$, multiply by $r^{-\theta-d}$, and integrate with respect to $r$. Then,
\begin{align*}
  &(1+\lambda)\|u\|_{L_{p,\theta}} + (1+\sqrt{\lambda})\|x_dD_xu\|_{L_{p,\theta}} + \|x_d^2D_x^2u\|_{L_{p,\theta}}
  \\
  &\leq N \left(\|f\|_{L_{p,\theta}} + \|u\|_{L_{p,\theta}} + \|x_dD_xu\|_{L_{p,\theta}}\right) \leq N\|f\|_{L_{p,\theta}}.
\end{align*}
Here, we used \eqref{eq2152239} for the last inequality.
Thus, $u\in H_{p,\theta}^2$, which proves the existence.
The theorem is proved.
\end{proof}

\section{Optimality of the weights when $\lambda=0$} \label{sectheta}

In this section, we prove that the ranges of $\theta$ in our main theorems are sharp when $\lambda=0$.

\subsection{Elliptic equations when $d=1$}

We show that $\theta\in \bR\setminus\{\alpha p,\beta p\}$ is optimal in Theorem \ref{thmell} $(iii)$.

Let us assume that the coefficients $a,b$, and $c$ are constants. We first show that \eqref{eq216207} fails to hold when $\theta$ is one root of \eqref{eq2152256}.
Let us consider the case $\theta=\alpha p$.
Let $u \in C_c^\infty(\bR_+)$. As in \eqref{eq2152249}, $v:=x^\alpha u$ satisfies
\begin{equation*}
     -x^2aD_x^2v + x (2a\alpha+b) D_xv = x^\alpha f.
\end{equation*}
If \eqref{eq216207} holds for $\theta=\alpha p$, then for any $\varepsilon>0$,
\begin{align} \label{eq8182344}
  \|v\|_{L_{p,0}} &=\|u\|_{L_{p,\alpha p}} \leq N \|f\|_{L_{p,\alpha p}} = N \|x^\alpha f\|_{L_{p,0}} \nonumber
  \\
  &\leq N \left(\|xD_xv\|_{L_{p,0}} + \|x^2D_x^2v\|_{L_{p,0}} \right) \nonumber
  \\
  &\leq \varepsilon\|v\|_{L_{p,0}} + N_{\varepsilon}\|x^2D_x^2v\|_{L_{p,0}},
\end{align}
Here, for the last inequality, we used \eqref{eq2152354}. By taking sufficiently small $\varepsilon$, we obtain
\begin{equation*}
    \|v\|_{L_{p,0}} \leq N\|x^2D_x^2v\|_{L_{p,0}},
\end{equation*}
which contradicts the optimality of Hardy's inequality (see e.g. Historical remark 5.5 in the preface of \cite{K85}). Hence, the claim is proved for $\theta=\alpha p$. For the case when $\theta=\beta p$, one just needs to repeat the above argument with $x^\beta u$ instead of $v=x^\alpha u$.

\subsection{Parabolic equations when $d=1$}

We show that the range $(\alpha p,\beta p)$ is optimal when $\lambda=0$ in Theorem \ref{thmpara} $(ii)$. By investigating the proof of Theorem \ref{thmell}, if the theorem holds true for some $\theta\in\bR$, then one can obtain the a priori estimate for the corresponding elliptic equations. From \textbf{1}, we deduce that we cannot solve the parabolic equations for $\theta=\alpha p$ or $\theta=\beta p$. Hence, we only consider the case $\theta\notin [\alpha p,\beta p]$.

Assume that the coefficients $a,b$, and $c$ are constants.
Similar to \eqref{eq2152249}, by considering $x^{-1/2-b/2a} u$ instead of $u$, we can also assume that $b=-a$. In this case, we still have the condition $c>0$ since \eqref{eq2152256} has two distinct real roots. Moreover, the range of $\theta$ is given by $\left(-p\sqrt{\frac{c}{a}},p\sqrt{\frac{c}{a}}\right)$.
Let $f\in C_c^\infty(\bR\times\bR_+)$ be a non-negative function such that $f=1$ in $(-1,0)\times (1,e)$.
If we denote $v(t,x):=u(t,e^x)$, then it satisfies
\begin{equation*}
  v_t=av_{xx}-cv+g
\end{equation*}
in $\bR^2$, where $g(t,x)=f(t,e^x)$. Thus, $v$ can be represented as
\begin{equation*}
  v(t,x)=\int_{-\infty}^t \int_{\bR} \frac{1}{2\sqrt{a\pi(t-s)}} e^{-\frac{(x-y)^2}{4a(t-s)}-c(t-s)} g(s,y)dyds.
\end{equation*}
For $t\geq1, x\geq1$, and $(s,y)\in(-1,0)\times (0,1)$, we have $1/(2t)\leq 1/(t-s)\leq 1/t$ and $(x-y)^2\leq x^2$, which lead to
\begin{equation*}
  -\frac{(x-y)^2}{4a(t-s)}-c(t-s) \geq -\frac{x^2}{4at}-ct-c, \quad \frac{1}{\sqrt{t-s}} \geq \frac{1}{\sqrt{2t}}.
\end{equation*}
Thus, one can find a positive constant $M$, independent of $t$ and $x$, such that
\begin{equation*}
  v(t,x)\geq Mt^{-1/2} e^{-\frac{x^2}{4at}-ct}, \quad t\geq1, x\geq1.
\end{equation*}
Let $\theta >p\sqrt{\frac{c}{a}}$. Since $2at\theta/p>1$ for $t>T_0:=\max\{1,p/(2a \theta)\}$,
\begin{align*}
 \int_{\{x\geq 1\}}t^{-1/2}e^{-\frac{p}{4at}(x-\frac{2at \theta}{p})^2} dx \geq \sqrt{a\pi}\int_{\{x\geq0\}} \frac{1}{\sqrt{a\pi t}}e^{-\frac{p}{4at}x^2} dx = \sqrt{\frac{a\pi}{p}}.
\end{align*}
Thus, for $T>T_0$,
\begin{align*}
  &\int_{\Omega_T} |u|^p x^{\theta-1}dxdt = \int_{(-\infty,T)\times\bR} |v|^p e^{\theta x} dxdt
  \\
  &\geq M^p \int_{(T_0,T)\times\{x\geq1\}} t^{-p/2}e^{-\frac{p}{4at}(x-\frac{2at \theta}{p})^2 + (\frac{a}{p}\theta^2-cp)t} dxdt
\\
  &= M^p \sqrt{\frac{a\pi}{p}} \int_{(T_0,T)} t^{-(p-1)/2} e^{(\frac{a}{p}\theta^2-cp)t} dt.
\end{align*}
This implies that if $\theta >p\sqrt{\frac{c}{a}}$, then the following estimate
\begin{equation*}
  \|u\|_{\bL_{p,\theta}(T)} \leq N\|f\|_{\bL_{p,\theta}(T)}
\end{equation*}
does not hold since the left-hand side diverges as $T\to \infty$. The case when $\theta <-p\sqrt{\frac{c}{a}}$ is similar.

\subsection{Parabolic and elliptic equations when $d\geq2$}

In this step, we prove that when $d\geq2$ and $\lambda=0$, the range $(\alpha p,\beta p)$ is sharp in Theorems \ref{thmpara} and \ref{thmell}.
As in \textbf{2}, by investigating the proof of Theorem \ref{thmell}, it suffices to show that the range is sharp for the elliptic equations.

We first show that the uniqueness fails to hold when $\theta>\beta p$.
We consider the operator $\sL_e u:=-x_d^2\Delta u + b_dx_d D_du +cu$, where $b_d$ and $c$ are constants. By considering $x^\gamma u$ instead of $u$, we further assume that $b_d=-1$. Then, as in \textbf{1}, we need the condition $c>0$ to guarantee that \eqref{eq2152256} has two distinct real roots, say $\alpha=-\sqrt{c}$ and $\beta=\sqrt{c}$.

Let $I_\nu(x_d)$ be the modified Bessel function of the first kind of order $\nu\in\bR$, which is defined as
\begin{equation*}
  I_\nu (x_d)=e^{-\nu\pi i/2}J_{\nu}(ix_d),
\end{equation*}
where $J_\nu$ is the Bessel function of the first kind of order $\nu$:
\begin{equation*}
  J_\nu(x_d):=\sum_{m=0}^\infty \frac{(-1)^m}{m!\Gamma(1+\nu+m)}\left(\frac{x_d}{2}\right)^{2m+\nu}
\end{equation*}
and
\begin{equation*}
  J_{-\nu}(x_d):=\sum_{m=0}^\infty \frac{(-1)^m}{m!\Gamma(1-\nu+m)}\left(\frac{x_d}{2}\right)^{2m-\nu}
\end{equation*}
for $\nu\geq0$.
We consider the modified Bessel function of the second kind of order $\nu\in\bR$, which is defined as
\begin{equation*}
  K_\nu(x_d)=\frac{\pi}{2}\frac{I_{-\nu}(x_d)-I_\nu(x_d)}{\sin (\nu\pi)}
\end{equation*}
if $\nu$ is not an integer, and
\begin{equation*}
  K_n(x_d)=\lim_{\nu\to n} K_\nu(x_d)
\end{equation*}
if $n$ is an integer. It is well known that (see e.g. 9.6.1, 9.6.8, 9.6.9, and 9.7.2 of \cite{AS48}), $K_\nu$ is a solution to
\begin{equation*}
  x_d^2 D_d^2 K_{\nu}(x_d) + x_d D_d K_{\nu}(x_d) -(x_d^2+\nu^2)K_{\nu}(x_d)=0,
\end{equation*}
and we have the following estimates
\begin{equation} \label{eq10311527}
  |K_{\nu}(x_d)| \leq Nx_d^{-|\nu|}, \quad x_d\leq1, \, \nu\neq0,
\end{equation}
\begin{equation} \label{eq11041209}
  |K_{0}(x_d)| \leq -N\log x_d, \quad x_d\leq1,
\end{equation}
and
\begin{equation} \label{eq10311528}
  |K_{\nu}(x_d)| \leq Nx_d^{-1/2}e^{-x_d}, \quad x_d\geq1.
\end{equation}

Let $p\geq 2$. For a non-zero function $\eta\in C_c^\infty(\bR^{d-1})$ such that $\eta(\xi')=0$ near $\xi'=0$, we denote
\begin{equation*}
  v(\xi',x_d):=K_{\sqrt{c}}(|\xi'|x_d) \eta(\xi'),
\end{equation*}
which satisfies
\begin{equation} \label{eq1131512}
  x_d^2D_d^2v+x_dD_dv-(|\xi'|^2x_d^2+c)v=0.
\end{equation}
Let us consider
\begin{equation*}
  u(x)=u(x',x_d):= \frac{1}{(2\pi)^{(d-1)/2}}\int_{\bR^{d-1}} e^{i\xi'\cdot x'} v(\xi',x_d) d\xi',
\end{equation*}
which is well defined due to \eqref{eq10311527} and \eqref{eq10311528}.
Then by \eqref{eq1131512}, $u$ is a non-zero solution to $\sL_e u=0$. Thus, it remains to show that $u\in H_{p,\theta}^2$. By the Hausdorff-Young inequality in the $x'$-variable, the Minkowski inequality, \eqref{eq10311527}, and \eqref{eq10311528},
\begin{align*}
  &\int_{\bR^d_+} |u(x)|^p x_d^{\theta-1} dx
  \\
  &\leq N \int_0^\infty \left( \int_{\bR^{d-1}} |v(\xi',x_d)|^{p'} d\xi' \right)^{p/p'} x_d^{\theta-1} dx_d
  \\
  &\leq N \left(\int_{\bR^{d-1}} \left(\int_{0}^{\infty} |v(\xi',x_d)|^{p} x_d^{\theta-1} d{x_d}\right)^{p'/p} d\xi' \right)^{p/p'}
  \\
  &\leq N \left(\int_{\bR^{d-1}} |\eta(\xi')|^{p'}|\xi'|^{-\theta p'/p} \left(\int_{0}^{\infty} |K_{\sqrt{c}}(x_d)|^p x_d^{\theta-1} d{x_d}\right)^{p'/p} d\xi' \right)^{p/p'} <\infty,
\end{align*}
where $p'=p/(p-1)$.
One can also notice that $v(x):=\zeta(x_d) u(x/r)$ satisfies \eqref{eq8151424} with $f_r=0$, where $\zeta\in C_c^\infty((2,3))$ is a standard nonnegative cutoff function. Then by the uniqueness result of \cite[Theorem 2.5]{D12}, we have $v\in W_p^2(\bR^d)$.
Thus, we can repeat the proof of Lemma \ref{lem8150107} to obtain that
  \begin{align*}
  &\int_{\bR_+} |xD_xu|^p x^{\theta-1} dx + \int_{\bR_+} |x^2D_{x}^2u|^p x^{\theta-1} dx \leq N\int_{\bR_+} |u|^{p} x^{\theta-1} dx.
  \end{align*}
 Since this inequality holds for any $\theta\in\bR$, we obtain $u\in H_{p,\theta}^2$. Thus, the claim is proved when $p\geq2$.

Next, we consider the case $p<2$. 
Let $\zeta_n(x_d)$ be a cut-off function such that $\zeta_n=1$ on $(1/n,n)$, $\zeta_n=0$ on $(0,1/(2n))\cup(2n,\infty)$, $x_d|\zeta_n'| \leq N$, and $x_d^2|\zeta_n''| \leq N$. Then by H\"older's inequality, for a sufficiently large $m\in\bN$,
\begin{align} \label{eq11040029}
  &\int_{\bR^d_+} \left(|u\zeta_n|^p + |x_dD_x(u\zeta_n)|^p + |x_d^2D_x^2(u\zeta_n)|^p \right) x_d^{\theta-1} dx \nonumber
  \\
  &\leq N \left(\int_{\bR^d_+} \left(|u|^2 + |x_dD_xu|^2 + |x_d^2D_x^2u|^2 \right) \left(1+|x'|^2\right)^m x_d^{2\theta/p -1} dx\right)^{p/2} \nonumber
  \\
  &\qquad\quad\times\left(\int_{A_n} \left(1+|x'|^2\right)^{mp/(p-2)} x_d^{-1} dx\right)^{(2-p)/2} \nonumber
    \\
  &\leq N(n) \left(\int_{\bR^d_+} \left(|u|^2 + |x_dD_xu|^2 + |x_d^2D_x^2u|^2 \right) \left(1+|x'|^2\right)^m x_d^{2\theta/p -1} dx\right)^{p/2},
\end{align}
where $A_n=\bR^{d-1}\times (1/(2n),2n)$. By the Plancherel theorem,
\begin{align} \label{eq11011849}
  &\int_{\bR^d_+} \left(|u|^2 + |x_dD_xu|^2 + |x_d^2D_x^2u|^2 \right) \left(1+|x'|^2\right)^m x_d^{2\theta/p -1} dx \nonumber
  \\
  &\leq N(d,m) \sum_{0\leq i\leq j \leq 2} \sum_{k=0}^m \int_{\bR^d_+} |\xi'|^{2i} |x_d^j D_d^{j-i} D_{\xi'}^kv|^2 x_d^{2\theta/p -1} d\xi' dx_d.
\end{align}
Since $\eta(\xi')=0$ near $\xi'=0$, when $\eta(\xi')\neq0$,
\begin{align*}
  D_{\xi'}^l |\xi'| \leq N(\eta,l)
\end{align*}
for any $l\in \bN$. Thus, using \eqref{eq10311527}, \eqref{eq11041209}, \eqref{eq10311528}, and the relations $K_{-\nu}=K_\nu$ and
\begin{equation*}
  -2K_{\nu}'=K_{\nu-1}+K_{\nu+1}
\end{equation*}
(see e.g. 9.6.6 and 9.6.26 of \cite{AS48}),
for $0\le i\le j\le 2$ and $\xi'\in \bR^{d-1}$ such that $\eta(\xi')\neq0$,
\begin{align} \label{eq11040005}
  |x_d^{j} D_d^{j-i}D_{\xi'}^k\left(K_{\sqrt{c}}(|\xi'|x_d)\right)| &\leq N \sum_{l=0}^{k} x_d^{l+j} |K_{\sqrt{c}}^{(l+j-i)}(|\xi'|x_d)| \nonumber
  \\
  &\leq N \sum_{l=-k-2}^{k+2}  \sum_{|l_0|\leq l}(x_d^{|l|}+x_d^{|l|+2}) |K_{\sqrt{c}+l_0}(|\xi'|x_d)| \nonumber
  \\
  &\leq N\sum_{l=-k-2}^{k+2}  (x_d^{|l|}+x_d^{|l|+2}) |\widetilde{K}_{\sqrt{c}+l}(|\xi'|x_d)|,
\end{align}
where
\begin{equation*}
 \widetilde{K}_{\nu}(x_d):=
    \begin{cases}
       x_d^{-|\nu|}, \quad &x_d\leq 1,\nu\neq0,
        \\
        -\log x_d, \quad &x_d\leq1, \nu=0,
        \\
        e^{-x_d/2}, \quad &x_d>1.
    \end{cases}
\end{equation*}
Since $2\theta/p>2\beta=2\sqrt{c}$, by \eqref{eq11040005},
\begin{equation} \label{eq11011850}
  \sum_{0\leq i \leq j\leq2} \sum_{k=0}^m \int_{\bR^d_+} |\xi'|^{2i} |x_d^j D_d^{j-i} D_{\xi'}^kv|^2 x_d^{2\theta/p - 1} d\xi' dx_d \leq N(\eta).
\end{equation}
Thus, by \eqref{eq11040029}, \eqref{eq11011849}, and \eqref{eq11011850}, we have $u\zeta_n \in H_{p,\theta}^2$.

Suppose that the a priori estimate \eqref{eq216207} holds true when $\theta>\beta p$.
Since $u\zeta_n\in H_{p,\theta}^2$, $\sL_e u=0$, and
\begin{align*}
  \sL_e(u\zeta_n) = -2x_d^2\zeta_n'D_du - x_d^2\zeta_n''u -x_d\zeta_n'u,
\end{align*}
by \eqref{eq216207} and H\"older's inequality, for a sufficiently large $m\in\bN$,
\begin{align} \label{eq11011848}
  &\int_{\bR^d_+} \left(|u\zeta_n|^p + |x_dD_x(u\zeta_n)|^p + |x_d^2D_x^2(u\zeta_n)|^p \right) x_d^{\theta-1} dx \leq N\int_{\bR^d_+} |\sL_e(u\zeta_n)|^p x_d^{\theta-1} dx \nonumber
  \\
  &\leq N\int_{B_n} \left(|u|^p + |x_dD_xu|^p + |x_d^2D_x^2u|^p \right) x_d^{\theta-1} dx \nonumber
  \\
  &\leq N \left(\int_{\bR^d_+} \left(|u|^2 + |x_dD_xu|^2 + |x_d^2D_x^2u|^p \right) \left(1+|x'|^2\right)^m x_d^{2\theta/p -1} dx\right)^{p/2} \nonumber
  \\
  &\qquad\quad\times\left(\int_{B_n} \left(1+|x'|^2\right)^{mp/(p-2)} x_d^{-1} dx\right)^{(2-p)/2} \nonumber
    \\
  &\leq N \left(\int_{\bR^d_+} \left(|u|^2 + |x_dD_xu|^2 + |x_d^2D_x^2u|^2 \right) \left(1+|x'|^2\right)^m x_d^{2\theta/p -1} dx\right)^{p/2} \nonumber
  \\
  &\qquad\quad\times (\log 2)^{(2-p)/2}\left(\int_{\bR^{d-1}} \left(1+|x'|^2\right)^{mp/(p-2)} dx'\right)^{(2-p)/2} \nonumber
  \\
  &\leq N \left(\int_{\bR^d_+} \left(|u|^2 + |x_dD_xu|^2 + |x_d^2D_x^2u|^2 \right) \left(1+|x'|^2\right)^m x_d^{2\theta/p -1} dx\right)^{p/2},
\end{align}
where $B_n:=\bR^{d-1} \times \left((1/(2n),1/n)\cup (n,2n)\right)$, and $N$ is independent of $n$. 
Combining \eqref{eq11011849}, \eqref{eq11011850}, and \eqref{eq11011848},
\begin{align*}
  \int_{\bR^d_+} \left(|u\zeta_n|^p + |D_x(u\zeta_n)|^p + |x_d^2D_x^2(u\zeta_n)|^p \right) x_d^{\theta-1} dx \leq N(\eta).
\end{align*}
By letting $n\to \infty$, we obtain that $u \in H_{p,\theta}^2$, and it is a non-zero solution to $\sL_e u=0$, which is a contradiction.

Now we use a duality argument to prove that Theorem \ref{thmell} does not hold when $\theta<\alpha p$. As above, we still consider the operator $\sL_e u:= -x_d^2\Delta u -x_dD_du + cu$ and the case $\alpha=-\sqrt{c}$. One can show that
\begin{equation*}
    \sL_e^* u:= -x_d^2\Delta u -3x_dD_du + (c-1)u
\end{equation*}
is the dual operator of $\sL_e$, and the corresponding quadratic equation for $\sL_e^*$ is
\begin{equation*}
    z^2-2z-(c-1)=0,
\end{equation*}
whose two real roots are $\sqrt{c}+1$ and $-\sqrt{c}+1$. Our approach is to derive a contradiction by assuming that the theorem holds when $\theta<\alpha p=\sqrt{c}p$ and then showing that, in this case, we obtain the uniqueness result (or a priori estimate) for $\sL_e^*$ when $\theta'>(\sqrt{c}+1)p'$, where $p'=p/(p-1)$ and $\theta/p+\theta'/p'=1$.

Let $g \in C_c^\infty(\bR^d_+)$ and $\theta<\alpha p=\sqrt{c}p$. Then by the assumption, there is a solution $v\in H_{p,\theta}^2$ to $\sL_e v=g$. Then for $u\in C_c^\infty(\bR^d_+)$,
\begin{align*}
    \left|\int_{\bR^d_+} ug \,dx\right| &= \left|\int_{\bR^d_+} u\sL_e v \,dx\right| = \left|\int_{\bR^d_+} v\sL_e^* u \,dx\right|
    \\
    &\leq \|v\|_{L_{p,\theta}} \|\sL_e^* u\|_{L_{p',\theta'}} \leq N\|g\|_{L_{p,\theta}} \|\sL_e^* u\|_{L_{p',\theta'}}.
\end{align*}
Here, for the last inequality, we used \eqref{eq216207} for $v$. Since $g$ is arbitrary, we obtain
\begin{equation*}
    \|u\|_{L_{p',\theta'}} \leq N\|\sL_e^*u\|_{L_{p',\theta'}},
\end{equation*}
which yields the uniqueness for $\sL_e^*$. Thus, our goal is obtained.

Lastly, we consider the case $\theta=\alpha p$ or $\beta p$. Due to a duality argument as above, we only show that
\begin{align} \label{eq10231745}
   \int_{\bR_+^d} |u|^p x_d^{\theta-1} dx\leq N \int_{\bR_+^d} |\sL_e u|^p x_d^{\theta-1} dx, \quad u\in C_c^\infty(\bR^d_+)
\end{align}
fails to hold when $\theta=\alpha p$. Note that since $u\in C_c^\infty(\bR^d_+)$, $\sL_e u$ is well defined on $\bR^d_+$.
Let
\begin{equation*}
  u(x',x_d) := x_d^{-\alpha}w(x')\zeta(x_d),
\end{equation*}
where $w\in C_c^\infty(\bR^{d-1})$, and $\zeta\in C^\infty((0,\infty))$ such that $\eta=1$ on $(0,1)$, and $\zeta=0$ on $(2,\infty)$. Then due to \eqref{eq2152256},
\begin{align*}
  \sL_e u&:=-x_d^2\Delta u -x_d D_du +cu 
  \\
  &= -x_d^{-\alpha+2} \zeta \Delta_{x'}w - x_d^{-\alpha+2} \zeta'' w +\left(2\alpha -1 \right)x_d^{-\alpha+1} \zeta' w
  \\
  &\quad +\left( -\alpha(\alpha+1)+\alpha + c\right)x_d^{-\alpha} \zeta w
  \\
  &= -x_d^{-\alpha+2} \zeta \Delta_{x'}w - x_d^{-\alpha+2} \zeta'' w +\left(2\alpha -1 \right)x_d^{-\alpha+1} \zeta' w =:f(x).
\end{align*}
Here, since $|f| \approx x_d^{-\alpha+2}$ near $x_d=0$, and $f=0$ near $x_d=\infty$,  $f\in L_{p,\theta}$.

By using this example, we show that we cannot obtain the a priori estimate \eqref{eq10231745} when $\theta= \alpha p$. 
Let $\eta_n(x_d)$ is a cut-off function such that $\eta_n=1$ on $(1/n,\infty)$, $\eta_n=0$ on $(0,1/(2n))$, $x_d|\eta_n'| \leq N$, and $x_d^2|\eta_n''| \leq N$.
For $\delta\in(0,1)$, we further define $\tau_n(x_d):=\eta_n(x_d^{\delta})$, which leads to
\begin{equation*}
  x_d|\tau_n'| \leq N\delta, \quad x_d^2|\tau_n''| \leq N\delta.
\end{equation*}
Then both $u\tau_n$ and
\begin{align*}
  f_n := \sL_e(u\tau_n) = \tau_n f -2x_d^2\tau_n'D_du - x_d^2\tau_n''u -x_d\tau_n'u
\end{align*}
 are in the space $L_{p,\theta}$. Note that
 \begin{align*}
   \int_{\bR_+^d} |x_d^2\tau_n'D_du|^p x_d^{\theta-1} dx \leq N\delta^p \int_{(2n)^{-1/\delta}}^{1} x_d^{-1} dx_d \leq N \delta^{p-1} \log(2n).
 \end{align*}
Thus, by similar computations,
\begin{align} \label{eq10231720}
  \|f_n\|_{L_{p,\theta}}^p &\leq N \left( \|\tau_n f\|_{L_{p,\theta}}^p + \|x_d^2\tau_n'D_xu\|_{L_{p,\theta}}^p + \|x_d^2\tau_n''u\|_{L_{p,\theta}}^p + \|x_d\tau_n'u\|_{L_{p,\theta}}^p \right) \nonumber
  \\
  &\leq N\left(\|f\|_{L_{p,\theta}}^p + \delta^{p-1} \log(2n)\right).
\end{align}
On the other hand,
\begin{align} \label{eq10231721}
  \int_{\bR_+^d} |\tau_n u|^p x_d^{\theta-1} dx \geq N\int_{n^{-1/\delta}}^1 x_d^{-1} dx_d = N\delta^{-1}\log n
\end{align}
Combining \eqref{eq10231745}, \eqref{eq10231720}, and \eqref{eq10231721}, 
\begin{align*}
  \delta^{-1}\log n &\leq N\left(\|f\|_{L_{p,\theta}}^p + \delta^{p-1} \log(2n)\right),
\end{align*}
which is equivalent to
\begin{align*}
  \delta^{-1} &\leq N\left(\frac{1}{\log n}\|f\|_{L_{p,\theta}}^p + \delta^{p-1} \frac{\log(2n)}{\log n}\right).
\end{align*}
Since $\|f\|_{L_{p,\theta}}<\infty$, letting $n\to \infty$ and $\delta\to0$ in order, we arrive at a contradiction.

\begin{remark}
Let us discuss the optimality results for the corresponding divergence form equations introduced in \cite{DR24div}. If we apply $(2.11)$ in \cite[Theorem 2.14]{DR24div}, which is the divergence form version of \eqref{eq216207}, we still obtain \eqref{eq8182344}. Hence, the desired result follows for the one-dimensional elliptic equations. Furthermore, the counterexamples in \textbf{2} and \textbf{3} also imply the optimality of \cite[Theorems 2.6 and 2.14]{DR24div}.
\end{remark}

\section{Application: degenerate viscous Hamilton-Jacobi equations} \label{sec_app}

In this section, we consider a degenerate viscous Hamilton–Jacobi equation as an application of the main result. More precisely, we study
\begin{equation} \label{eq3231213}
    u_t +\lambda u -x_d^2\Delta u + x_d b_iD_iu + cu = H(t,x,D_xu), \text{ in } \Omega_T,
\end{equation}
where $H$ is a given Hamiltonian, and $b_i,c$, and $\lambda\geq0$ are constant. Here, we assume that there exist a constant $C>0$ and a measurable function $h$ defined on $\Omega_T$ such that for any $(t,x)\in \Omega_T$ and $P\in\bR^d$,
\begin{equation} \label{eq3241451}
    |H(t,x,P)| \leq C\left(\min\{x_d,1\}|P| + h(t,x)\right).
\end{equation}

Now we present a regularity result for solutions to \eqref{eq3231213}. In the following theorem, we consider $\frH_{p,\theta}^2(T):=\frH_{p,p,\theta,1}^2(T)$ and $\bL_{p,\theta}(T):=\bL_{p,p,\theta,1}(T)$.

\begin{theorem}
    Let $T\in(-\infty,\infty]$, $p\in(1,\infty)$, and $\theta\in\bR$. Then there exists a number $\lambda_0=\lambda_0(d,p,\theta,b_i,c,C)\geq0$ such that for any $\lambda\geq\lambda_0$ and $h\in \bL_{p,\theta}(T)$, there is a unique solution $u\in \frH_{p,\theta}^2(T)$ to \eqref{eq3231213}. Moreover, for this solution,
    \begin{eqnarray} \label{eq3241444}
        &\|u_t\|_{\bL_{p,\theta}(T)} + (1+\lambda)\|u\|_{\bL_{p,\theta}(T)} + (1+\sqrt{\lambda})\|x_dD_xu\|_{\bL_{p,\theta}(T)} + \|x_d^2D_x^2u\|_{\bL_{p,\theta}(T)}& \nonumber
        \\
        &\leq N \|h\|_{\bL_{p,\theta}(T)},&
    \end{eqnarray}
    where $N=N(d,p,\theta,b_i,c,C)$.
\end{theorem}

\begin{proof}
    Due to the method of continuity, we only need to prove \eqref{eq3241444}. Suppose that $u\in \frH_{p,\theta}^2(T)$ is a solution to \eqref{eq3231213}. By Theorem \ref{thmpara} $(i)$, there is $\lambda_1\geq0$ such that for any $\lambda\geq\lambda_1$,
    \begin{eqnarray*}
        &\|u_t\|_{\bL_{p,\theta}(T)} + (1+\lambda)\|u\|_{\bL_{p,\theta}(T)} + (1+\sqrt{\lambda})\|x_dD_xu\|_{\bL_{p,\theta}(T)} + \|x_d^2D_x^2u\|_{\bL_{p,\theta}(T)}&
        \\
        &\leq N \|H\|_{\bL_{p,\theta}(T)}.&
    \end{eqnarray*}
    By \eqref{eq3241451}, \eqref{eq3241450} below, and Young's inequality, for any $\varepsilon>0$,
    \begin{align*}
        \|H\|_{\bL_{p,\theta}(T)} &\leq N \left( \|\min\{x_d,1\}|D_xu|\|_{\bL_{p,\theta}(T)} + \|h\|_{\bL_{p,\theta}(T)} \right)
        \\
        &\leq N\left(\|u\|_{\bL_{p,\theta}(T)}^{1/2} \|x_d^2D_x^2u\|_{\bL_{p,\theta}(T)}^{1/2} + \|u\|_{\bL_{p,\theta}(T)} + \|h\|_{\bL_{p,\theta}(T)}\right)
        \\
        &\leq N\varepsilon \|x_d^2D_x^2u\|_{\bL_{p,\theta}(T)} + N(\varepsilon)\|u\|_{\bL_{p,\theta}(T)} + N \|h\|_{\bL_{p,\theta}(T)}.
    \end{align*}
    By taking $\varepsilon>0$ so that $N\varepsilon<1/2$, we have
    \begin{eqnarray*}
        &\|u_t\|_{\bL_{p,\theta}(T)} + (1+\lambda)\|u\|_{\bL_{p,\theta}(T)} + (1+\sqrt{\lambda})\|x_dD_xu\|_{\bL_{p,\theta}(T)} + \|x_d^2D_x^2u\|_{\bL_{p,\theta}(T)}&
        \\
        &\leq N\left(\|u\|_{\bL_{p,\theta}(T)} + N \|h\|_{\bL_{p,\theta}(T)}\right).&
    \end{eqnarray*}
    Thus, there is $\lambda_2\geq0$ so that if $\lambda\geq\lambda_2$, we can absorb $\|u\|_{\bL_{p,\theta}(T)}$ on the right-hand side to the left-hand side. Hence, the claim is obtained for $\lambda_0:=\max\{\lambda_1,\lambda_2\}$. The proof is completed.
\end{proof}

\begin{remark}
    $(i)$ A related result for \eqref{eq3231213}, with the diffusion term $x_d^\alpha\Delta$ ($\alpha\in(0,2)$) is given in \cite[Section 6]{DPT24}. Compared to this result, there is no restriction on the weight parameter $\theta$.

    $(ii)$ Obviously, one can obtain the same regularity result when $H$ satisfies
    \begin{equation*}
        |H(t,x,P)| \leq C\left(\min\{x_d^\delta,1\}|P| + h(t,x)\right)
    \end{equation*}
    where $\delta\geq1$. However, the case $\delta\in(0,1)$ remains open, including the less degenerate case $x_d^\alpha \Delta$ with $\alpha\in(0,2)$.
    
    $(iii)$ In the literature, Hamiltonians more general than the specific form \eqref{eq3241451} are typically of greater interest. For general degenerate viscous Hamilton-Jacobi equations, Lipschitz a priori estimates for solutions are available (see \cite{CIL92, AT15, LMT17} and the references therein). However, for general $H$, stronger regularity results are not well understood, and optimal regularity of solutions near the boundary $\{x_d = 0\}$ has not been investigated.
\end{remark}

The following lemma can be obtained by repeating the proof of \cite[Lemma 6.3]{DPT24}. We provide the proof for the reader's convenience.

\begin{lemma}
    Let $p\in(1,\infty)$ and $\theta \in \bR$. Then for any $u\in C_c^\infty((-\infty,T]\times \bR^d_+)$, $v:=\min\{x_d,1\}|D_xu|$ satisfies the following interpolation inequality
    \begin{align} \label{eq3241450}
        \|v\|_{\bL_{p,\theta}(T)} \leq N\left(\|u\|_{\bL_{p,\theta}(T)}^{1/2} \|x_d^2D_x^2u\|_{\bL_{p,\theta}(T)}^{1/2} + \|u\|_{\bL_{p,\theta}(T)}\right),
    \end{align}
    where $N=N(d,p,\theta)$.
\end{lemma}

\begin{proof}
Let $
\Omega_m=\{(t,x)\in \Omega_T\,:\, 2^{-m-1} < x_d \leq 2^{-m}\}$ with $m\in\bZ$.
By the Gagliado-Nirenberg interpolation inequality, for each $m\in \bZ$, we get
\begin{equation*}
    \|D_xu\|_{L_{p}(\Omega_m)} \leq N \left (\|u\|_{L_p(\Omega_m)}^{1/2} \|D_x^2u\|_{L_p(\Omega_m)}^{1/2} + 2^{m} \|u\|_{L_p(\Omega_m)} \right).
\end{equation*}
For $m\geq0$, $v= x_d |D_xu|$ and
\begin{align*}
&\|v1_{\Omega_m}\|_{\bL_{p,\theta}(T)}^p = \|x_d|Du|1_{\Omega_m}\|_{\bL_{p,\theta}(T)}^p
= \int_{\Omega_m} |Du|^{p} x_d^{p+\theta-1} dxdt
\\
&\leq N 2^{-m(p+\theta-1)} \int_{\Omega_m} |D_xu|^{p} dxdt
\\
&\leq   N 2^{-m(p+\theta-1)} \left(\int_{\Omega_m} |u|^{p} dxdt \right)^{1/2} \left(\int_{\Omega_m} |D_x^2u|^{p} dxdt \right)^{1/2} 
\\
&\qquad+ N  2^{-m(\theta-1)} \int_{\Omega_m} |u|^{p} dxdt
\\
&\leq  N \left(\|u1_{\Omega_m}\|_{\bL_{p,\theta}(T)}^{p/2}\|x_d^2D_x^2 u1_{\Omega_m}\|_{\bL_{p,\theta}(T)}^{p/2} + \|u1_{\Omega_m}\|_{\bL_{p,\theta}(T)}^{p} \right).
\end{align*}
Similarly, for $m<0$,
\begin{align*}
\|v1_{\Omega_m}\|_{\bL_{p,\theta}(T)}^p \leq  N 2^{mp} \left(\|u1_{\Omega_m}\|_{\bL_{p,\theta}(T)}^{p/2}\|x_d^2D_x^2 u1_{\Omega_m}\|_{\bL_{p,\theta}(T)}^{p/2} + \|u1_{\Omega_m}\|_{\bL_{p,\theta}(T)}^{p} \right).
\end{align*}
Thus, we obtain
\begin{align*}
    \|v\|_{\bL_{p,\theta}(T)}^p = \sum_{m\in\bZ} \|v1_{\Omega_m}\|_{\bL_{p,\theta}(T)}^p \leq N\left(\|u\|_{\bL_{p,\theta}(T)}^{p/2} \|x_d^2D_x^2u\|_{\bL_{p,\theta}(T)}^{p/2} + \|u\|_{\bL_{p,\theta}(T)}^p\right).
\end{align*}
The lemma is proved.
\end{proof}


\section*{Acknowledgement} \pdfbookmark{Acknowledgement}{Acknowledgement}

 The authors would like to thank the anonymous referee for his/her very useful comments.



\end{document}